\newtheorem{theorem}{Theorem}[section] 
\newtheorem{lemma}[theorem]{Lemma}     
\author{Simeon Ball and Joaquim Monserrat}
\title[Sylvester's problem in higher dimensions]{A generalisation of Sylvester's problem to higher dimensions}
\begin{document}

\baselineskip=17pt

\maketitle

\begin{abstract}
In this article we consider $S$ to be a set of points in $d$-space with the property that any $d$ points of $S$ span a hyperplane and not all the points of $S$ are contained in a hyperplane. The aim of this article is to introduce the function $e_d(n)$, which denotes the minimal number of hyperplanes meeting $S$ in precisely $d$ points, minimising over all such sets of points $S$ with $|S|=n$. 
\end{abstract}

\section{Introduction}

In 1893 Sylvester \cite{Sylvester1893} asked if it is possible to have a finite set of points $S$ in the plane, not all contained in a line,  with the property that no line contains precisely two points of $S$. Sylvester's problem was solved by Gallai \cite{Gallai1944} in 1944, who proved that there is always a line containing precisely two points of $S$. Since Gallai's proof, a number of articles (\cite{deBE1948}, \cite{CM1968}, \cite{CS1993}, \cite{Dirac1951},  \cite{GT2012}, \cite{KM1958}, \cite{Moser1957} for example) have been published that aim to determine the minimum number of lines $e_2(n)$ meeting $S$ in exactly two points, minimising over all sets of points $S$ with $|S|=n$, not all collinear.

\begin{figure}[!htp]
\begin{center}
\includegraphics[width=.34\textwidth]{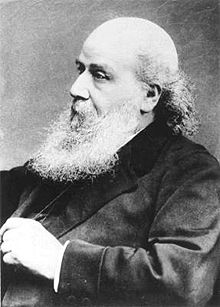}\\
\caption{J. J. Sylvester}
\end{center}
\end{figure}

A straightforward generalisation of Sylvester's problem to higher dimensions runs into difficulties. Motzkin \cite{Motzkin1951} observed that a finite set of points $S$ in $3$-space, distributed on two skew lines, has the property that no plane contains precisely three points of $S$. The survey article by Borwein and Moser \cite{BM1990}, and subsequently the book of problems by Brass, Moser and Pach \cite{BMP2005}, have a section on generalization of Sylvester's problem to higher dimensional spaces. The generalisation they consider is to minimise the number of hyperplanes $\pi$, with the property that all but one point of $\pi \cap S$ are contained in a hyperplane of $\pi$, again minimising over all sets of points $S$ with $|S|=n$.

In this note an alternative generalisation to higher dimensional spaces is proposed. Let $S$ be a set of points in $d$-space with the property that any $d$ points of $S$ span a hyperplane and not all the points of $S$ are contained in a hyperplane. Let $e_d(n)$ denote the minimal number of hyperplanes meeting $S$ in precisely $d$ points, minimising over all such sets of points $S$ with $|S|=n$. Note that for $d=2$ this coincides with the definition above since we automatically rule out double points in the planar case.

For any set  $S$ of points in $d$-space we say that a hyperplane $\pi$ is an {\em ordinary hyperplane} if $|\pi \cap S|=d$.

Throughout the article we shall consider $S$ to be a subset of points of PG$(d,\mathbb{R})$, the $d$-dimensional projective space over ${\mathbb R}$. This is no clearly no restriction if $S$ contains no points on the hyperplane at infinity $\pi_{\infty}$. If $S$ does contain points on $\pi_{\infty}$, we can apply a projective transformation which maps a hyperplane containing no points of $S$ to the hyperplane at infinity. In this way we obtain a set of points $S'$ which is contained in the affine part of the $d$-space, and with the same intersection properties with respect to hyperplanes as the original set $S$. Hence, it is also no restriction if $S$ does contain points of $\pi_{\infty}$. There are many texts providing background on projective spaces, see \cite{Coxeter2008} for example.

We will use the notation 
$$
\langle (x_0,\ldots,x_d) \rangle
$$
to denote a point of PG$(d,\mathbb{R})$, where $(x_0,\ldots,x_d)$ is a non-zero vector of ${\mathbb R}^{d+1}$.

\section{Preliminary results}

\begin{lemma} \label{projectS}
Let $S$ be a set of $n$ points of PG$(d,\mathbb{R})$ with the property that every $d$ points of $S$ span a hyperplane and $S$ is not contained in a hyperplane. For $x \in S$, denote by $S_x$ the set of $n-1$ points of PG$(d-1,\mathbb{R})$ obtained from $S$ by projecting from $x$. Then there is a point $x \in S$ for which
$$
dN \geqslant nN_x,
$$
where $N$ is the number of ordinary hyperplanes spanned by $S$ and $N_x$ is the number of ordinary hyperplanes spanned by $S_x$.
\end{lemma}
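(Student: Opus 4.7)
The plan is to use a double-counting argument. The key observation is that projection from a point $x \in S$ sets up a bijection between ordinary hyperplanes of $S$ passing through $x$ and ordinary hyperplanes of the projected set $S_x$. Once this bijection is established, summing $N_x$ over $x \in S$ equals $dN$, since each ordinary hyperplane is counted once for each of its $d$ points; averaging then gives an $x$ with $N_x \leqslant dN/n$.

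First, I would verify that the projection from $x$ maps $S \setminus \{x\}$ injectively into $\mathrm{PG}(d-1,\mathbb{R})$, so that $|S_x| = n-1$. If two points $y, z \in S \setminus \{x\}$ were to project to the same point, the triple $x, y, z$ would be collinear; adjoining any $d-3$ further points of $S$ would then produce a set of $d$ points of $S$ contained in a flat of dimension at most $1 + (d-3) = d-2$, contradicting the hypothesis that every $d$ points of $S$ span a hyperplane. (This argument is for $d \geqslant 3$; the case $d=2$ reduces to the planar Sylvester set-up, where the hypothesis automatically precludes merging.)

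Next, I would set up the bijection. Hyperplanes of $\mathrm{PG}(d,\mathbb{R})$ through $x$ correspond bijectively with hyperplanes of $\mathrm{PG}(d-1,\mathbb{R})$ via the projection map. For any hyperplane $\pi$ of $\mathrm{PG}(d,\mathbb{R})$ through $x$, denote by $\bar{\pi}$ its image. Injectivity of the projection on $S \setminus \{x\}$ gives
$$
|\bar{\pi} \cap S_x| = |\pi \cap S| - 1,
$$
so $\bar{\pi}$ meets $S_x$ in exactly $d-1$ points iff $\pi$ meets $S$ in exactly $d$ points. Thus $N_x$ equals the number of ordinary hyperplanes of $S$ containing $x$.

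Finally, I would double-count incident pairs $(x,\pi)$ with $x \in S$ and $\pi$ an ordinary hyperplane containing $x$. Since each ordinary hyperplane $\pi$ satisfies $|\pi \cap S| = d$,
$$
\sum_{x \in S} N_x \;=\; \sum_{\pi \text{ ordinary}} |\pi \cap S| \;=\; dN.
$$
Choosing $x$ to minimise $N_x$ gives $N_x \leqslant dN/n$, i.e. $dN \geqslant nN_x$, as required. The only genuinely substantive step is the injectivity of the projection on $S \setminus \{x\}$; once this is in hand, the correspondence of ordinary hyperplanes and the averaging argument are essentially formal.
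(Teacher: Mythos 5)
Your proposal is correct and follows essentially the same route as the paper: the identity $\sum_{x \in S} N_x = dN$ obtained by double-counting incident pairs, followed by the pigeon-hole principle. The only difference is that you spell out the injectivity of the projection on $S \setminus \{x\}$ and the resulting bijection between ordinary hyperplanes through $x$ and ordinary hyperplanes of $S_x$, which the paper leaves implicit.
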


\begin{proof}
Counting in two ways the pairs $(x,\pi)$ where $x \in S$ and $\pi$ is an ordinary hyperplane of PG$(d,{\mathbb R})$, we have
$$
\sum_{x \in S} N_x= dN.
$$
The lemma follows from the pigeon-hole principle.
\end{proof}

For example, in Figure~\ref{cubeproject}, the eight point cube in PG$(3,\mathbb{R})$, which spans eight ordinary planes, is projected onto the seven point ``broken'' Fano plane in PG$(2,\mathbb{R})$ which spans three ordinary lines. Thus, in this particular case, we have equality in the inequality of Lemma~\ref{projectS}.

\begin{figure}[h]
\centering
\includegraphics[width=4.6 in]{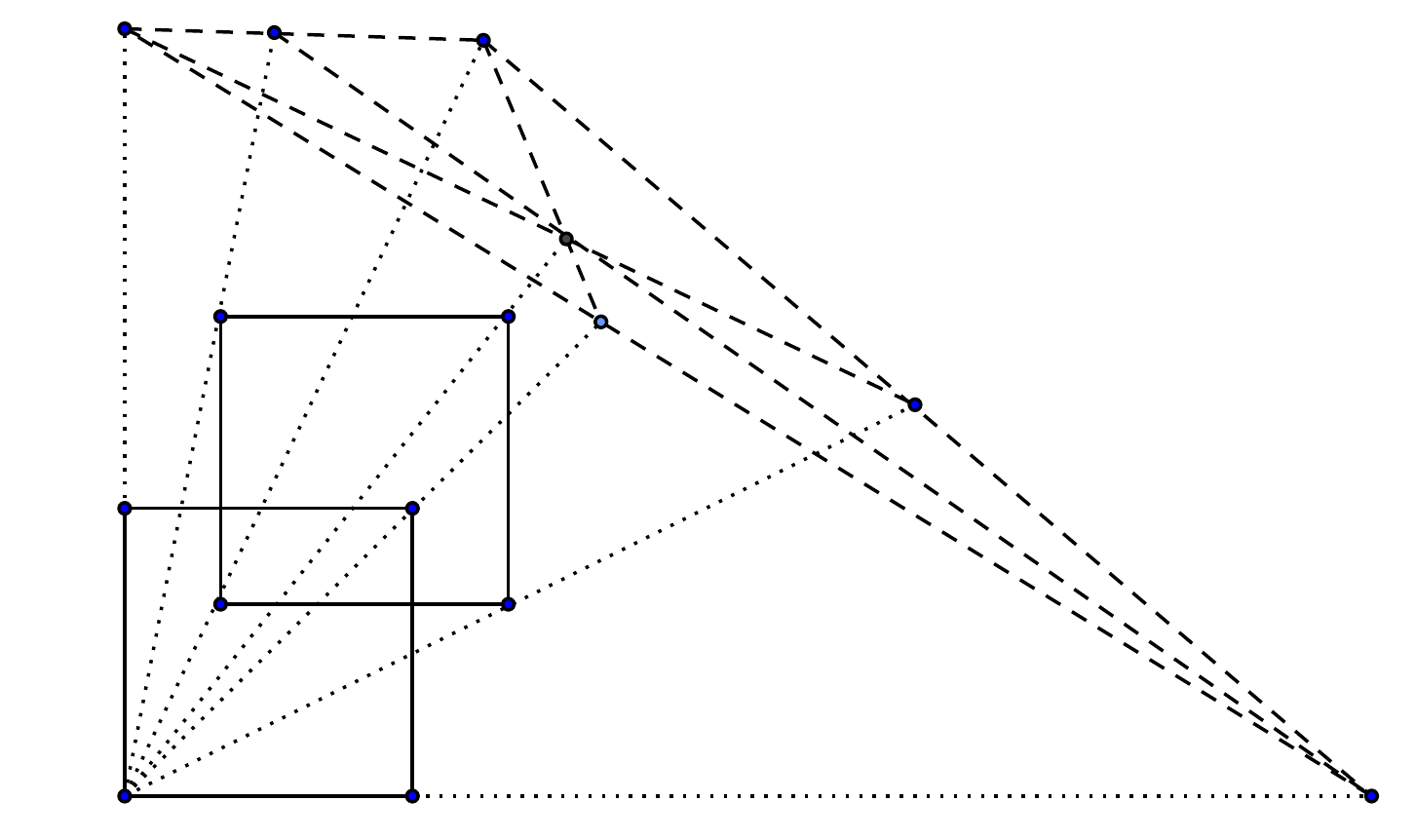}
\caption{The projection of a cube onto a ``broken'' Fano plane.}
\label{cubeproject}
\end{figure}

\begin{lemma} \label{project}
For $d \geqslant 3$,
$$
e_d(n) \geqslant \frac{n}{d}e_{d-1}(n-1).
$$
\end{lemma}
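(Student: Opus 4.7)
The plan is to take $S$ to be a minimising configuration in PG$(d,\mathbb{R})$ with $|S|=n$ and $N=e_d(n)$ ordinary hyperplanes, apply Lemma~\ref{projectS} to obtain a point $x \in S$ with $dN \geqslant nN_x$, and then show that $N_x \geqslant e_{d-1}(n-1)$. The latter reduces to checking that the projected set $S_x$ in PG$(d-1,\mathbb{R})$ is an admissible configuration for the definition of $e_{d-1}(n-1)$.

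First I would observe that, because $d \geqslant 3$, no three points of $S$ can be collinear: three collinear points together with any $d-3$ further points of $S$ would yield $d$ points that fail to span a hyperplane, contradicting the hypothesis on $S$. This immediately implies that the projection from $x$ is injective on $S \setminus \{x\}$, so $|S_x|=n-1$.

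Next I would verify the two structural hypotheses on $S_x$. Given any $d-1$ points of $S_x$ arising as the projections of $y_1,\ldots,y_{d-1} \in S \setminus \{x\}$, the hyperplane of PG$(d-1,\mathbb{R})$ they span corresponds to the unique hyperplane of PG$(d,\mathbb{R})$ through $x,y_1,\ldots,y_{d-1}$, which exists since these $d$ points span a hyperplane by hypothesis; hence every $d-1$ points of $S_x$ span a hyperplane of PG$(d-1,\mathbb{R})$. Moreover, if $S_x$ were contained in some hyperplane $H$ of PG$(d-1,\mathbb{R})$, then the cone over $H$ with apex $x$ would be a hyperplane of PG$(d,\mathbb{R})$ containing all of $S$, contradicting the hypothesis that $S$ spans PG$(d,\mathbb{R})$. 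Therefore $S_x$ is a valid configuration of $n-1$ points in PG$(d-1,\mathbb{R})$, whence $N_x \geqslant e_{d-1}(n-1)$.

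Combining with Lemma~\ref{projectS} gives
$$
d\, e_d(n) = dN \geqslant n N_x \geqslant n\, e_{d-1}(n-1),
$$
which is the desired inequality. The only slightly delicate step is the no-three-collinear observation, which is exactly the place where the assumption $d \geqslant 3$ is used; everything else is a routine correspondence between hyperplanes of PG$(d,\mathbb{R})$ through $x$ and hyperplanes of PG$(d-1,\mathbb{R})$ under the projection.
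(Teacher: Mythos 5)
Your proposal is correct and follows exactly the route the paper intends: the paper's proof is just the single line ``this follows immediately from Lemma~\ref{projectS}'', and your argument supplies the routine verification (injectivity of the projection via the no-three-collinear observation, and the two admissibility conditions on $S_x$) that the authors leave implicit. Nothing to add.
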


\begin{proof}
This follows immediately from Lemma~\ref{projectS}.
\end{proof}

\begin{lemma} \label{project2}
For $d \geqslant 3$,
$$
e_d(n) \geqslant \left\lceil \frac{n}{d} \left\lceil \frac{n-1}{d-1}\left\lceil  \frac{n-2}{d-2} \ldots \left\lceil  \frac{(n-d+3)}{3} e_2(n-d+2) \right\rceil \ldots \right\rceil \right\rceil \right\rceil .
$$
\end{lemma}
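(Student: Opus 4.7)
The plan is to iterate Lemma~\ref{project} down to the planar case, proceeding by induction on $d$. The key observation is that $e_d(n)$ is a non-negative integer, since it counts hyperplanes; consequently the bound of Lemma~\ref{project} strengthens to
$$
e_d(n) \geq \left\lceil \tfrac{n}{d}\,e_{d-1}(n-1) \right\rceil.
$$
This strengthening is the only substantive point in the argument; the rest is bookkeeping.

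For the base case $d = 3$, the sharpened bound reads $e_3(n) \geq \lceil \tfrac{n}{3}\,e_2(n-1) \rceil$, which coincides with the statement of the lemma when $d = 3$: the nested expression on the right collapses to this single ceiling, since $n - d + 3 = n$ and $n - d + 2 = n - 1$ in this case. For the inductive step, I would assume the claim for dimension $d - 1$ applied to $n - 1$ points, giving a nested-ceiling lower bound for $e_{d-1}(n-1)$. Substituting this lower bound into the sharpened version of Lemma~\ref{project}, and using that $x \mapsto \lceil \tfrac{n}{d}\,x \rceil$ is monotone non-decreasing, yields the claimed expression for $e_d(n)$. The numerators $n, n-1, n-2, \ldots$ and denominators $d, d-1, d-2, \ldots$ shift by one in the expected way when passing from $(d-1, n-1)$ to $(d, n)$, and the innermost argument $e_2(n - d + 2)$ is unchanged by the shift.

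The main obstacle is essentially a non-obstacle: the argument is routine once integrality of $e_d(n)$ is noted and monotonicity of the ceiling is used to propagate the inner bound through each successive application of Lemma~\ref{project}. Accordingly I expect the paper's proof to consist of a single line citing Lemma~\ref{project} together with the integrality remark.
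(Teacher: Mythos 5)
Your proposal is correct and matches the paper's argument exactly: the paper's entire proof is ``This follows by repeated application of Lemma~\ref{project} and the fact that $e_d(n)$ is an integer,'' which is precisely the iteration-plus-integrality-plus-monotonicity argument you spell out (and you correctly predicted the one-line form). No issues.
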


\begin{proof}
This follows by repeated application of Lemma~\ref{project} and the fact that $e_d(n)$ is an integer.
\end{proof}

Csima and Sawyer \cite{CS1993} proved that if $n \neq 7$ then $e_2(n) \geqslant 6n/13$, so we have the following theorem.

\begin{theorem}
For $n \neq d+5$,
$$
e_d(n) \geqslant \left\lceil \frac{n}{d} \left\lceil \frac{n-1}{d-1}\left\lceil  \frac{n-2}{d-2} \ldots \left\lceil  \frac{(n-d+3)}{3} \left\lceil \frac{6(n-d+2)}{13} \right\rceil \right\rceil \ldots \right\rceil \right\rceil \right\rceil .
$$
\end{theorem}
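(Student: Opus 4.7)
The strategy is a direct chaining: combine Lemma~\ref{project2} with the Csima--Sawyer inequality $e_2(m) \geqslant 6m/13$ (valid for $m \neq 7$). Since $e_d(n)$ and $e_2(m)$ are positive integers, the Csima--Sawyer bound can be sharpened to $e_2(m) \geqslant \lceil 6m/13 \rceil$, which is the form needed for the innermost ceiling of the target inequality.

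First I would invoke Lemma~\ref{project2} to obtain
$$
e_d(n) \geqslant \left\lceil \frac{n}{d} \left\lceil \frac{n-1}{d-1}\left\lceil  \frac{n-2}{d-2} \ldots \left\lceil  \frac{n-d+3}{3} e_2(n-d+2) \right\rceil \ldots \right\rceil \right\rceil \right\rceil.
$$
Then I would set $m = n-d+2$ and apply the Csima--Sawyer estimate. The hypothesis $n \neq d+5$ in the theorem translates precisely to $m \neq 7$, so the Csima--Sawyer bound is applicable and gives $e_2(n-d+2) \geqslant \lceil 6(n-d+2)/13 \rceil$.

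To finish, I would observe that the nested expression on the right-hand side of the displayed inequality is a monotone non-decreasing function of its innermost argument, because it is built by alternating multiplication by positive rationals with ceilings. Therefore, substituting the integer lower bound $\lceil 6(n-d+2)/13 \rceil$ for $e_2(n-d+2)$ in that expression yields a still-valid lower bound on $e_d(n)$, which is exactly the inequality stated in the theorem. There is essentially no obstacle: the only subtlety is keeping track of the exceptional case $m = 7$, which corresponds to $n = d+5$ and is precisely the value excluded in the hypothesis.
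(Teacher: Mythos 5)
Your proposal is correct and follows exactly the paper's route: the paper derives this theorem by substituting the Csima--Sawyer bound $e_2(m)\geqslant 6m/13$ (for $m=n-d+2\neq 7$, i.e.\ $n\neq d+5$) into Lemma~\ref{project2}. Your additional remarks on the integrality sharpening and the monotonicity of the nested ceilings are the minor details the paper leaves implicit.
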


One of the main results of this article will be the following theorem which concerns the asymptotic behaviour of $e_d(n)$. We shall prove this theorem after we have deduced some structural theorem for sets of $n$ points which span few ordinary hyperplanes.

\begin{theorem} \label{upperlower}
For $n$ sufficiently large,
$$
e_2(n)=\left\{ \begin{array}{l} \frac{1}{2}n, \ \mathrm{if} \ n \ \mathrm{is\ even} \vspace{.2 cm}\\  \frac{3}{4}n-\frac{3}{4}, \  \mathrm{if} \ n \ \mathrm{is\ 1\ mod\ 4 } \vspace{.2 cm}\\  \frac{3}{4}n-\frac{9}{4}, \  \mathrm{if} \ n \ \mathrm{is\ 3\ mod\ 4 }\end{array}\right.
$$
$$
e_3(n)=\left\{ \begin{array}{l} \frac{1}{4}n^2-n, \ \mathrm{if} \ n \ \mathrm{is\ 0\ mod\ 4} \vspace{.2 cm}\\  \frac{3}{8}n^2-n+\frac{5}{8}, \  \mathrm{if} \ n \ \mathrm{is\ 1\ mod\ 4 } \vspace{.2 cm}\\  \frac{1}{4}n^2-\frac{	1}{2}n, \  \mathrm{if} \ n \ \mathrm{is\ 2\ mod\ 4 }\vspace{.2 cm}\\  \frac{3}{8}n^2-\frac{3}{2}n+\frac{17}{8}, \  \mathrm{if} \ n \ \mathrm{is\ 3\ mod\ 4 } \end{array}\right.
$$
and there is a universal constant $c$ for which
$$
\frac{3}{d!}n^{d-1}-\frac{c}{d!}n^{d-2} \leqslant e_d(n)\leqslant{n-1 \choose d-1}, \ \mathrm{if} \ d \geqslant 4.
$$
\end{theorem}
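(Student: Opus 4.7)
The plan treats the three cases separately by dimensional range.

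For $d = 2$ and $n$ sufficiently large the formula is the main theorem of Green and Tao~\cite{GT2012}, which I would simply invoke; their proof classifies the extremal planar configurations as variations of equally spaced points on cubic curves. For $d = 3$ both directions are new: the upper bound I would realise by four explicit families of constructions in $\mathrm{PG}(3,\mathbb{R})$, one per residue of $n$ modulo $4$, obtained by pulling back the Green--Tao planar extremal sets via projection from an exterior point. The lower bound rests on the structural theorems promised earlier in the paper: any $S$ whose ordinary-plane count is close to $e_3(n)$ is forced to lie on or very near a specific algebraic surface, on which a direct count delivers the inequality.

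For $d \geq 4$ the upper bound $e_d(n) \leq \binom{n-1}{d-1}$ is witnessed by the following construction. Take $n-1$ points in general position on a hyperplane $H$ of $\mathrm{PG}(d,\mathbb{R})$, so that any $d$ of them span $H$, together with one extra point $p \notin H$. Any $d$-subset of $S$ then spans a hyperplane (either $H$ itself, or a hyperplane through $p$ meeting $H$ in a $(d-2)$-flat spanned by the chosen points). A hyperplane distinct from $H$ and not through $p$ meets $H$ in a $(d-2)$-flat carrying at most $d-1$ of the $H$-points by general position, so it cannot be ordinary; a hyperplane through $p$ is ordinary exactly when the $(d-2)$-flat in which it meets $H$ carries precisely $d-1$ of the $H$-points, and each $(d-1)$-subset of the $n-1$ $H$-points determines such a flat uniquely. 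This yields exactly $\binom{n-1}{d-1}$ ordinary hyperplanes.

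For the lower bound when $d \geq 4$, the plan is to first establish the base case $e_4(n) \geq \tfrac{1}{8}n^3 - O(n^2)$ directly, and then, for $d \geq 5$, iterate Lemma~\ref{project} from dimension $d$ down to dimension $4$ to obtain
$$
e_d(n) \geq \frac{24\,n!}{d!\,(n-d+4)!}\, e_4(n-d+4),
$$
into which the base case substitutes to give the leading term $\tfrac{3}{d!}n^{d-1}$, with integrality and lower-order errors absorbed into the $\tfrac{c}{d!}n^{d-2}$ correction by taking $c$ large. The base case is the principal obstacle: the naive bound from Lemma~\ref{project} alone, $e_4(n) \geq \tfrac{n}{4} e_3(n-1) \geq \tfrac{n(n-1)^2}{16}$, delivers only $\tfrac{n^3}{16}$, a factor of two short of what is required. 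I expect the remedy to be a dichotomy built on the $e_3$ structural theorems: either every projection $S_x$ of a $4$-dimensional $S$ is itself near-extremal in $\mathrm{PG}(3,\mathbb{R})$, in which case $S$ is forced into a handful of families that can be counted directly, or a positive fraction of the $N_x$ strictly exceed $e_3(n-1)$, and the identity $\sum_{x \in S} N_x = 4N$ then automatically supplies the missing factor.
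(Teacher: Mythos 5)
Your architecture coincides with the paper's: upper bounds from explicit constructions (Green--Tao/regular polygons for $d=2$, prisms for $d=3$, and your cone over $n-1$ points in a hyperplane is exactly Lemma~\ref{trivex} for $d\geqslant 4$), lower bounds from structure theorems, and for $d\geqslant 4$ a base case at $d=4$ followed by iteration of Lemma~\ref{project}, with the same coefficient $24\,n!/(d!\,(n-d+4)!)$. The one step you leave open is the one the paper actually has to work for, and your sketch of it does not close.

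The gap is in the dichotomy for $e_4(n)\geqslant \tfrac{1}{8}n^3-O(n^2)$. Your second horn, ``a positive fraction of the $N_x$ strictly exceed $e_3(n-1)$,'' cannot supply the missing factor of two: if an $\epsilon$-fraction of points satisfy $N_x\geqslant(1+\delta)e_3(n-1)$ and the rest only $N_x\geqslant e_3(n-1)$, then $4N=\sum_x N_x$ yields $N\geqslant\tfrac{1+\epsilon\delta}{16}n^3$, an improvement in the constant but not the required doubling. What is needed, and what the paper proves (inside Theorem~\ref{ddimstructure}), is that \emph{all but a bounded number} of points $x$ satisfy $N_x\geqslant\tfrac{1}{2}(n-1)^2-c'(n-1)$, i.e.\ roughly \emph{twice} the extremal value. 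The mechanism is that the threshold in Theorem~\ref{threedimstructure} sits at $\tfrac{1}{2}n^2-cn$, not at $e_3(n)\approx\tfrac{1}{4}n^2$: any projection falling below that threshold is a (possibly punctured) prism or skew-prism, hence contained in two planes. So for each such exceptional $x$ the set $S$ lies in two hyperplanes through $x$, each carrying about $\tfrac{1}{2}(n-1)$ points of $S$; these hyperplanes are therefore determined by $S$ alone and do not depend on $x$, so every exceptional $x$ lies in their common $2$-flat, which by the hypothesis that every four points of $S$ span a hyperplane contains at most three points of $S$. Your first horn (``$S$ is forced into a handful of families that can be counted directly'') is thus not a case to be counted but one that the general-position hypothesis renders almost vacuous; without this rigidity step --- or some substitute showing the near-extremal projections are confined to $O(1)$ points --- the base case, and with it the whole $d\geqslant 4$ lower bound, does not follow.
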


Let $S$ be a set of $n$ points of PG$(d,\mathbb{R})$ and let $\tau_i$ denote the number of hyperplanes containing $i$ points of $S$. We will call a hyperplane containing $i$ points of $S$, an {\em $i$-secant} hyperplane. Therefore, an ordinary hyperplane spanned by $S$ is a $d$-secant hyperplane and $\tau_d$ is the number of ordinary hyperplanes spanned by $S$.

The following is a simple counting lemma which we shall need.

\begin{lemma} \label{trivcount}
Let $S$ be a set of $n$ points of PG$(d,\mathbb{R})$ with the property that every $d$ points of $S$ span a hyperplane and $S$ is not contained in a hyperplane. Then
$$\sum_{i=d}^{n-1} {i \choose d} \tau_i= {n \choose d}.$$
\end{lemma}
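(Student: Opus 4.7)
The plan is to prove the identity by double-counting pairs $(T,\pi)$, where $T$ is a $d$-element subset of $S$ and $\pi$ is a hyperplane of PG$(d,\mathbb{R})$ containing $T$ with $\pi \cap S \supseteq T$. I would count these pairs in two different ways.

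First, I would count by fixing $T$. Since every $d$ points of $S$ span a hyperplane (by hypothesis), each $d$-subset $T$ determines a unique hyperplane $\pi = \langle T \rangle$ that contains $T$. Moreover, this is the only hyperplane containing $T$, so each $T$ contributes exactly one pair. There are $\binom{n}{d}$ choices of $T$, giving $\binom{n}{d}$ pairs in total.

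Next, I would count by fixing $\pi$. Group the hyperplanes by their intersection size $i = |\pi \cap S|$. For a hyperplane $\pi$ with $|\pi \cap S| = i$, the number of $d$-subsets of $S$ contained in $\pi$ is $\binom{i}{d}$. Since $S$ is not contained in any hyperplane, we have $i \leq n-1$, and since we only care about hyperplanes containing at least $d$ points of $S$ (otherwise they contribute nothing), the range is $d \leq i \leq n-1$. Summing over all such hyperplanes gives $\sum_{i=d}^{n-1}\binom{i}{d}\tau_i$ pairs.

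Equating the two counts yields the identity. There is no real obstacle here: the only subtlety is ensuring that each $d$-subset corresponds to a unique hyperplane (which uses the hypothesis that any $d$ points span a hyperplane) and that the sum terminates at $i = n-1$ (which uses the hypothesis that $S$ does not lie in a hyperplane).
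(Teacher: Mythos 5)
Your proof is correct and is essentially the paper's own argument: the paper counts ordered $(d+1)$-tuples $(x_1,\ldots,x_d,\pi)$ with $\pi$ the hyperplane spanned by $x_1,\ldots,x_d$, while you count unordered $d$-subsets paired with their spanning hyperplane, which differs only by a factor of $d!$ on both sides. Your added care about why each $d$-subset determines a unique hyperplane and why the sum stops at $i=n-1$ is a fine elaboration of the same double count.
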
 

\begin{proof}
By counting $(d+1)$-tuples $(x_1,\ldots,x_d,\pi)$, where $x_1,\ldots,x_d \in S$ and $\pi$ is the hyperplane spanned by $x_1, \ldots, x_d$, in two ways.
\end{proof}

\section{Examples} \label{examples}

In the examples in this section we suppose that $n$ has at least $8$ points.

Let
$$
X_{2m}=\{ \langle (\cos(2\pi j/m), \sin (2 \pi j/m),1)\rangle \ | \ j=0,\ldots,m-1 \}
$$
$$
 \cup \{ \langle (-\sin (\pi j/m), \cos ( \pi j/m),0)\rangle  \ | \ j=0,\ldots,m-1\}.
$$

In Figure~\ref{affinebor}, the set $X_{12}$ is drawn in AG$(2,\mathbb{R})$, the line at infinity having been moved to the affine part, which accounts for the distortion of the regular polygon to six points on an ellipse.

\begin{figure}[h]
\centering
\includegraphics[width=6.1 in]{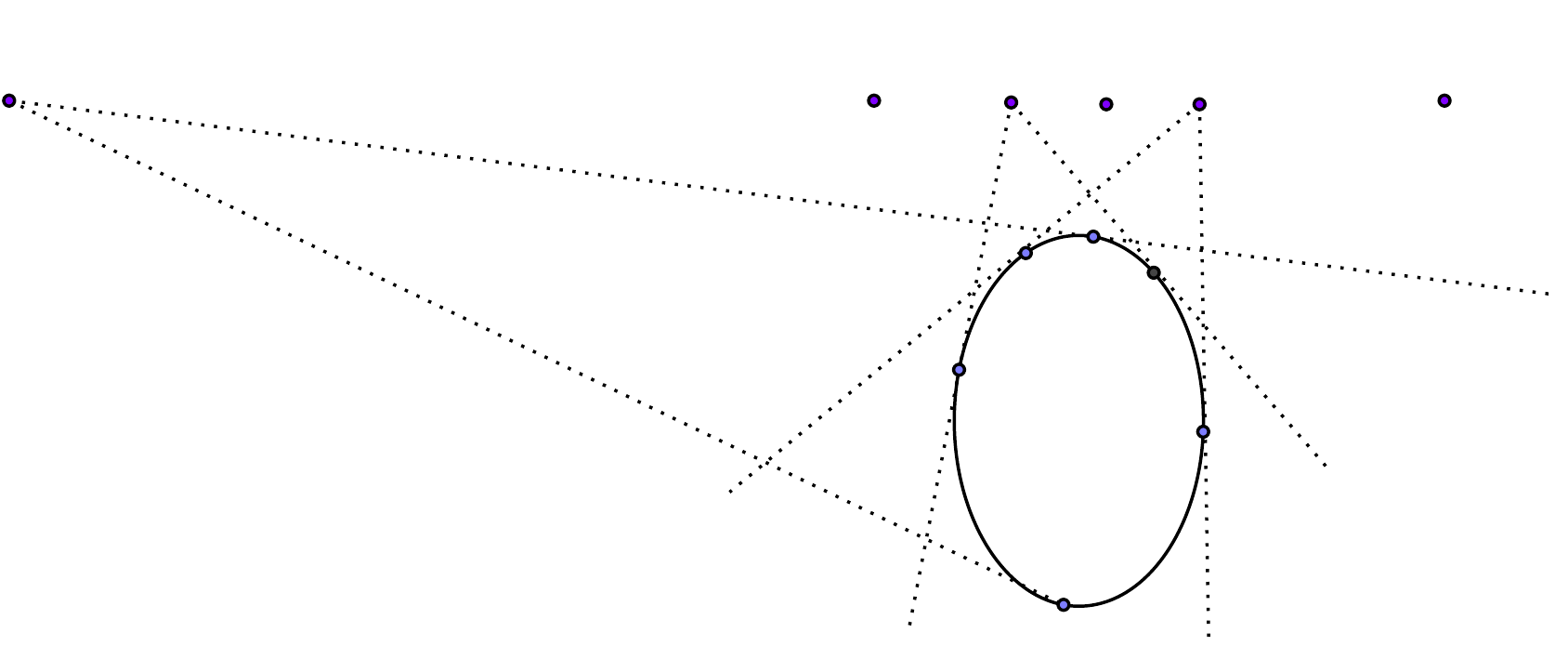}
\caption{The set $X_{12}$ spanning six ordinary lines.}
\label{affinebor}
\end{figure}

The following examples in PG$(2,\mathbb{R})$ were described by B\"or\"ocsky, as cited in \cite{CM1968}. The number of ordinary lines can be calculated using the sum of the angle formulas for the sine and co-sine functions, see \cite{GT2012}. In particular, one uses the fact that the line joining
$$
\langle (\cos(2\pi i/m), \sin (2 \pi i/m),1)\rangle \ \mathrm{and} \ \langle (\cos(2\pi j/m), \sin (2 \pi j/m),1)\rangle
$$
passes through the point
$$
\langle (-\sin (\pi (i+j)/m), \cos ( \pi (i+j)/m),0)\rangle.
$$

\begin{figure}[h]
\centering
\includegraphics[width=2.1 in]{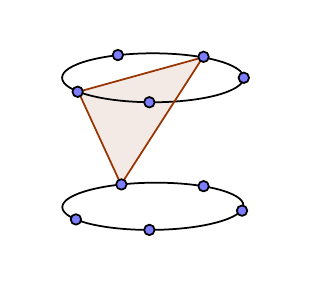}
\includegraphics[width=2.1 in]{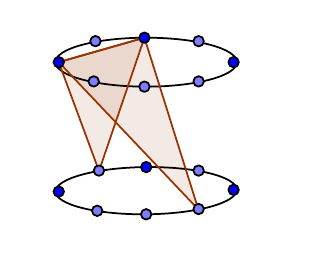}
\caption{The sets $P_{10}$ and $P_{16}$ spanning 20 and 48 ordinary planes respectively.}
\label{prism}
\end{figure}

\begin{lemma} \label{boro} {\rm (Regular polygon examples)}
If $n$ is even then the set $S=X_{n}$ spans $\frac{1}{2}n$ ordinary lines. If $n=1$ mod $4$ then the set $X_{n-1}$ together with the point $(0,0,1)$ spans $\frac{3}{4}n-\frac{3}{4}$ ordinary lines. If $n=3$ mod $4$ then the set $X_{n+1}$ with the point $(0,1,0)$ removed, spans $\frac{3}{4}n-\frac{9}{4}$ ordinary lines.
\end{lemma}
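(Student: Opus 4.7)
The plan is to give a line-by-line accounting of incidences in $X_{2m}$ and in its two perturbations, exploiting the stated identity that the line through $P_i=\langle(\cos(2\pi i/m),\sin(2\pi i/m),1)\rangle$ and $P_j$ passes through $Q_{(i+j)\bmod m}$. I will use throughout the elementary facts that a line meets the unit circle in at most two points, and that a line distinct from $\pi_{\infty}$ contains exactly one point of $\pi_{\infty}$.

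First classify the lines of $X_{2m}$ for $m\geqslant 3$. A line joining two distinct affine points $P_i,P_j$ is exactly the $3$-secant $\{P_i,P_j,Q_{(i+j)\bmod m}\}$. A line through $P_i$ with direction $Q_j$ contains a further affine point $P_k$ iff $Q_{i+k}=Q_j$, i.e.\ iff $k\equiv j-i\pmod m$, and this $P_k$ coincides with $P_i$ precisely when $j\equiv 2i\pmod m$. Hence such a line is the $3$-secant $\{P_i,P_{(j-i)\bmod m},Q_j\}$ if $j\not\equiv 2i\pmod m$, and the ordinary $2$-secant $\{P_i,Q_{2i\bmod m}\}$ otherwise; and $\pi_{\infty}$ is an $m$-secant. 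There are therefore exactly $m=n/2$ ordinary lines, settling the even case.

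For $n\equiv 1\pmod 4$, write $m=(n-1)/2$ (which is even), adjoin $O=(0,0,1)$, and note that the $3\times 3$ determinant
\[
\det\begin{pmatrix} 0 & 0 & 1 \\ \cos(2\pi i/m) & \sin(2\pi i/m) & 1 \\ -\sin(2\pi i/m) & \cos(2\pi i/m) & 0 \end{pmatrix}=\cos^2(2\pi i/m)+\sin^2(2\pi i/m)=1
\]
is non-zero, so $O$ lies on no previous ordinary line and the $m$ old ordinary lines survive. Each new line $OP_i$ also contains the antipode $P_{i+m/2}$ and the infinity point $Q_{(2i+m/2)\bmod m}$, so is a $4$-secant. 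The line $OQ_j$ contains some $P_k$ iff $2k\equiv j+m/2\pmod m$, which is solvable iff $j+m/2$ is even; so $OQ_j$ is ordinary for exactly the $m/2$ values of $j$ having parity opposite to $m/2$. The total is $m+m/2=3(n-1)/4$.

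For $n\equiv 3\pmod 4$, put $m=(n+1)/2$ (again even) and delete $Q_0$. The ordinary lines of $X_{2m}$ passing through $Q_0$ are those with $2i\equiv 0\pmod m$, namely $i\in\{0,m/2\}$, so two ordinary lines disappear. On the other hand, each old $3$-secant $\{P_i,P_{m-i},Q_0\}$ with $1\leqslant i<m/2$ becomes an ordinary line, contributing $m/2-1$ new ones; no other line changes type, and $\pi_{\infty}$ drops to $m-1\geqslant 3$ points. The total is $(m-2)+(m/2-1)=3m/2-3=\tfrac{3}{4}n-\tfrac{9}{4}$. The only step requiring a small calculation is the determinant above; the rest of the argument is bookkeeping of parities modulo $m$.
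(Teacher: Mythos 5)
Your proof is correct, and it takes essentially the route the paper intends: the paper states this lemma without proof, deferring to the incidence identity that the line $P_iP_j$ passes through $Q_{(i+j)\bmod m}$ (and to \cite{CM1968}, \cite{GT2012}), and your argument is a careful fleshing-out of exactly that bookkeeping. The classification of ordinary lines as the $m$ tangents in the even case, the parity analysis of $2k\equiv j+m/2\pmod m$ in the $1\bmod 4$ case, and the loss of two tangents against the gain of $m/2-1$ former $3$-secants in the $3\bmod 4$ case are all accurate.
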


Let
$$
P_{2m}=\{ \langle (\cos(2\pi j/m), \sin (2 \pi j/m),1,0)\rangle \ | \ j=0,\ldots,m-1 \}
$$
$$
 \cup \{ \langle (\cos(2\pi j/m), \sin (2 \pi j/m),0,1) \ | \ j=0,\ldots,m-1\}.
$$

The following examples in PG$(3,\mathbb{R})$ were described by the first author in \cite{Ball2016}. The number of ordinary planes can be calculated again using the sum of the angle formulas for the sine and co-sine functions, see \cite{Ball2016}.

\begin{lemma} \label{prism} {\rm (The prism examples)}
If $n=0$ mod $4$ then the set $S=P_{n}$ spans $\frac{1}{4}n^2-n$ ordinary planes. If $n=2$ mod $4$ then the set $P_{n}$ spans $\frac{1}{4}n^2-\frac{1}{2}n$ ordinary planes.
If $n=1$ mod $4$ then the set $S=P_{n}$ with a point removed spans $\frac{3}{8}n^2-n+\frac{5}{8}$ ordinary planes. If $n=3$ mod $4$ then the set $P_{n}$ with a point removed spans $\frac{3}{8}n^2-\frac{3}{2}n+\frac{17}{8}$ ordinary planes. \end{lemma}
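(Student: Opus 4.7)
The plan is to classify the planes of $\mathrm{PG}(3,\mathbb{R})$ by how they meet the two regular $m$-gons
$C_1 = \{p_j = \langle (\cos(2\pi j/m), \sin(2\pi j/m), 1, 0) \rangle\}$ and
$C_2 = \{q_j = \langle (\cos(2\pi j/m), \sin(2\pi j/m), 0, 1) \rangle\}$
that comprise $P_{2m}$, and then identify the ordinary ones via a sum-of-angles identity. Any plane other than $\{x_4 = 0\}$ or $\{x_3 = 0\}$ meets each carrier hyperplane in a line, and such a line meets the unit circle in at most two points; hence such a plane meets each $C_i$ in at most two points. Since $m \geqslant 4$, the two full-circle planes are not ordinary. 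Consequently every ordinary plane has intersection type $(2,1)$ or $(1,2)$ with respect to $(C_1, C_2)$.

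For $\pi$ through $p_{j_1}, p_{j_2} \in C_1$ and $q_j \in C_2$, I would set $\alpha = \pi(j_1 + j_2)/m$, $\beta = \pi(j_1 - j_2)/m$, and $\phi = 2\pi j/m$. The chord through $p_{j_1}, p_{j_2}$ lies on the line $x \cos\alpha + y \sin\alpha = \cos\beta$ in $\{x_3 = 1, x_4 = 0\}$, while $\pi$ meets $\{x_3 = 0, x_4 = 1\}$ in the line $x \cos\alpha + y \sin\alpha = \cos(\phi - \alpha)$. This second line meets $C_2$ in $q_j$ and in the point at angle $2\alpha - \phi$, which coincides with $q_j$ iff $j_1 + j_2 \equiv 2j \pmod{m}$; otherwise $\pi$ contains a fourth point and is not ordinary. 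Counting ordinary $(2,1)$-planes therefore amounts to counting triples $(\{j_1, j_2\}, j)$ satisfying this congruence, and by symmetry ordinary $(1,2)$-planes are counted by the analogous congruence with roles of $C_1$ and $C_2$ swapped.

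For $n = 2m$ even this gives the first two formulas directly: when $m$ is odd the map $j \mapsto 2j$ is a bijection of $\mathbb{Z}/m\mathbb{Z}$, yielding one valid $j$ per pair and total $2\binom{m}{2} = \tfrac{1}{4}n^2 - \tfrac{1}{2}n$; when $m$ is even only pairs with $j_1 + j_2$ even admit solutions, each producing two values of $j$, total $m(m-2) = \tfrac{1}{4}n^2 - n$. For odd $n = 2m-1$ I would remove a single point (say $q_0$) from $P_{n+1}$ and track the net change: ordinary planes missing $q_0$ remain ordinary, ordinary planes through $q_0$ drop a point and are lost, while the four-point $(2,2)$-planes through $q_0$ become ordinary in $S$. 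Specialising the criterion at $j = 0$ gives the number of each type of plane through $q_0$; in particular the newly ordinary $(2,2)$-planes are indexed by pairs $\{j_1, j_2\} \subset C_1$ with $j_1 + j_2 \not\equiv 0 \pmod{m}$. Adding survivors and newcomers and substituting $m = (n+1)/2$ yields the stated formulas for $n \equiv 1$ and $n \equiv 3 \pmod{4}$.

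The main obstacle is the bookkeeping in the odd-$n$ cases: depending on the parity of $m$, both the number of pairs summing to $0$ modulo $m$ and the fibre sizes of $j \mapsto 2j$ change, and these shifts produce the two distinct constants in the residues $n \equiv 1$ and $n \equiv 3 \pmod{4}$. One must also check that after removal no plane through $q_0$ contributes an unexpected ordinary incidence (for instance the plane $\{x_3 = 0\}$ still contains $m-1 \geqslant 4$ points of $S$), which is immediate from the first step. Beyond this careful case analysis the argument is routine arithmetic.
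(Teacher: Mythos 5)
Your proposal is correct and its core is the same as the paper's: both derive the incidence criterion $j_1+j_2\equiv 2j \pmod m$ from the sum-of-angles identity and count its solutions according to the parity of $m$ to get the even-$n$ formulas. The only divergence is in the odd-$n$ cases: you count directly, via the same congruence, the ordinary planes and the $(2,2)$-planes through the deleted point, whereas the paper instead invokes the vertex-transitivity of the prism together with the global counting identity of Lemma~\ref{trivcount} (substituting $\tau_{n/2}=2$ for the two carrier planes) to extract $\tau_4$ and the per-point incidence numbers. Your route is more self-contained and makes the bookkeeping explicit (I checked that it reproduces $\frac{3}{8}n^2-n+\frac{5}{8}$ and $\frac{3}{8}n^2-\frac{3}{2}n+\frac{17}{8}$); the paper's is shorter but hides the same case analysis inside the symmetry argument.
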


\begin{proof}
The plane
$$
\pi= \langle (\cos(2\pi i/m), \sin (2 \pi i/m),1,0), (\cos(2\pi j/m), \sin (2 \pi j/m),1,0), 
$$
$$
(\cos(2\pi k/m), \sin (2 \pi k/m),0,1)\rangle
 $$
contains the point $\langle (\cos(2\pi \ell/m), \sin (2 \pi \ell/m),0,1)\rangle$ if and only if there is an $\ell$ such that $i+j=k+\ell$. Therefore, $\pi$ is an ordinary plane spanned by $P_{2m}$ if $k$ satisfies $i+j=2k$ mod $m$.

If $m$ is odd then $i+j=2k$ mod $m$ has $\frac{1}{2}m(m-1)$ solutions where $i \neq j$. Therefore, if $n=2$ mod $4$ then a prism with $n$ points spans $\frac{1}{4}n^2-\frac{1}{2}n$ ordinary planes. If $m$ is even then $i+j=2k$ mod $m$ has $\frac{1}{2}m^2-m$ solutions where $i \neq j$. Therefore, if $n=0$ mod $4$ then a prism with $n$ points spans $\frac{1}{4}n^2-n$ ordinary planes.

By symmetry, every point of $S_n$ is incident with the same number of ordinary planes and $4$-secant planes. Therefore, by resolving the equation in Lemma~\ref{trivcount}, substituting $\tau_3$ and $\tau_{n/2}=2$, we can deduce $\tau_4$ and from that the precise number of $3$-secant planes and $4$-secant planes incident with a point of $S$. 

\end{proof}

The following example is the best known example for $d \geqslant 4$.

\begin{lemma} \label{trivex} {\rm (The trivial example, $n \geqslant d+2$)}
Let $S'$ be a set of $n-1$ points in a hyperplane $\pi$ with the property that every $d-1$ points of $S'$ span a hyperplane of $\pi$. Let $x$ be a point not in $\pi$ and let $S=S' \cup \{ x \}$. Then $S$ spans precisely ${n-1 \choose d-1}$ ordinary hyperplanes. For example, we could take
$$
S=\{ \langle (1,0,\ldots,0) \rangle \} \cup \{ \langle (0,1,t,t^2,\ldots,t^{d}) \rangle \ | \ t \in T \},
$$
where $T$ is a subset of ${\mathbb R}$ of size $n-1$.
\end{lemma}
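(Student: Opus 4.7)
My plan is to count the ordinary hyperplanes of $S$ by splitting according to whether they pass through the exterior point $x$. First I would observe that since $x \notin \pi$, every hyperplane $\pi'$ of PG$(d,\mathbb{R})$ distinct from $\pi$ meets $\pi$ in a hyperplane of $\pi$ (of dimension $d-2$), so
$$
\pi' \cap S = (\{x\} \cap \pi') \cup (\pi' \cap \pi \cap S').
$$
The hyperplane $\pi$ itself is not ordinary because $|\pi \cap S| = n - 1 \geqslant d+1$.

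Next I would spell out the general-position properties of $S'$ inside $\pi$. The stated hypothesis gives that every $d-1$ points of $S'$ span a hyperplane of $\pi$. In addition, the global hypothesis on $S$ that any $d$ points of $S$ span a hyperplane of $d$-space forces any $d$ points of $S'$ to span the whole of $\pi$, so no hyperplane of $\pi$ contains $d$ points of $S'$. With these two facts in hand, the case analysis becomes short. For $\pi' \neq \pi$ with $x \notin \pi'$, the intersection $\pi' \cap S$ has at most $d-1$ elements, so $\pi'$ is not ordinary. For $\pi'$ through $x$, ordinariness is equivalent to $|(\pi' \cap \pi) \cap S'| = d - 1$; the $(d-1)$-subsets of $S'$ are in bijection with the hyperplanes of $\pi$ meeting $S'$ in exactly $d - 1$ points, since each subset spans a unique hyperplane of $\pi$ and that hyperplane contains no further point of $S'$. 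Extending each such hyperplane of $\pi$ by $x$ produces a distinct ordinary hyperplane of $d$-space, yielding the count ${n-1 \choose d-1}$.

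For the explicit construction, I would note that the points $\langle (0, 1, t, t^2, \ldots, t^d) \rangle$ lie in the hyperplane $\pi = \{x_0 = 0\}$, where after suppressing the first coordinate they form the rational normal curve of degree $d$ in PG$(d-1,\mathbb{R})$. The non-vanishing of the Vandermonde determinant for $d$ distinct parameters yields both required general-position properties simultaneously, and the point $\langle (1, 0, \ldots, 0) \rangle$ is clearly not in $\pi$. The only mild obstacle throughout is disentangling the two layers of general position used inside $\pi$ (one at codimension one, the other at codimension two); once both are pinned down, the rest of the argument is essentially bookkeeping.
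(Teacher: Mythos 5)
Your proof is correct and complete. The paper states Lemma~\ref{trivex} without any proof at all (it is offered as ``the trivial example''), so there is nothing to compare against; your argument is the natural one, and the decomposition into hyperplanes through $x$ versus hyperplanes missing $x$ is exactly the right bookkeeping. The one genuinely valuable thing you add is the observation that the hypothesis as literally stated --- every $d-1$ points of $S'$ span a hyperplane of $\pi$ --- does not by itself forbid $d$ points of $S'$ from lying in a common hyperplane of $\pi$ (for $d=3$ it is vacuous, and a collinear $S'$ would make the count fail badly). That extra codimension-one condition is needed both to rule out ordinary hyperplanes avoiding $x$ and to guarantee that the ${n-1 \choose d-1}$ hyperplanes $\langle x, A\rangle$ are pairwise distinct and meet $S'$ in exactly $d-1$ points; you correctly import it from the requirement that $S$ itself be admissible for the definition of $e_d(n)$, and you correctly note that the Vandermonde determinant delivers both layers of general position for the explicit moment-curve example. (One cosmetic remark: the displayed coordinates $(0,1,t,\ldots,t^{d})$ in the paper have $d+2$ entries, one too many for PG$(d,\mathbb{R})$; the exponent should run only to $t^{d-1}$, which is what your Vandermonde argument implicitly uses.)
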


\section{Structural theorems}

In \cite[Theorem 2.4]{GT2012} Green and Tao prove the following theorem.

\begin{theorem} \label{twodimstructure}
There is a constant $c$ such that  for $n$ sufficiently large, a set of $n$ points in PG$(2,{\mathbb R})$, spanning less than $n-c$ ordinary lines, is projectively equivalent to one of the regular polygon examples from Lemma~\ref{boro}. 
\end{theorem}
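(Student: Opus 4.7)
The plan is to follow the strategy of Green and Tao, reducing the classification of ``few-ordinary-line'' configurations to an additive-combinatorial problem on a cubic curve. I would proceed in three stages.

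\textbf{Stage 1: Concentrate $S$ on a cubic.} Let $S$ span at most $n-c$ ordinary lines. A double-counting of pairs $(p,\ell)$ with $p \in S$ on a $2$-line shows that, after discarding $O(1)$ exceptional points, every remaining point lies on $\Omega(n)$ lines meeting $S$ in at least three points. A theorem of Green and Tao, refining ideas of Elekes and of Csima--Sawyer \cite{CS1993}, then asserts that if a positive proportion of point-pairs of an $n$-point planar set are collinear with a third point of the set, then all but $O(1)$ of the points lie on an algebraic curve of degree at most $3$. Invoking this, I would locate a cubic $C$ containing $n - O(1)$ points of $S$.

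\textbf{Stage 2: Classify the cubic.} Decomposing $C$ into irreducible components gives four cases: a smooth irreducible cubic; an irreducible singular cubic (nodal or cuspidal); an irreducible conic together with a line; three lines. On the smooth locus of each case, the group law makes three points collinear if and only if they sum to zero, so that the number of ordinary lines spanned by a finite coset can be computed by a Fourier/character analysis. A $2$-torsion count rules out the smooth elliptic configuration against the bound $n-c$; the nodal and cuspidal configurations yield order-$n$ ordinary lines but with the wrong leading constant, as does the conic-plus-line case; these are discarded. Only the three-line case survives, and it is precisely the Böröczky construction of Lemma~\ref{boro}.

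\textbf{Stage 3: Finish the projective classification.} Once $C$ is forced to be the union of three lines, identify the smooth-point group with (an index-$2$ extension of) the circle group, so that $S$ is a coset of a cyclic subgroup together with a complementary coset on one of the component lines. An explicit projective change of coordinates then identifies $S$ with $X_n$, $X_{n-1} \cup \{(0,0,1)\}$, or $X_{n+1} \setminus \{(0,1,0)\}$ according to $n \bmod 4$. The final step is to account for the $O(1)$ points discarded in Stage~1 by showing that inserting or removing any single point off the Böröczky model contributes $\Omega(1)$ new ordinary lines, which contradicts the hypothesis if $c$ is chosen large enough.

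The main obstacle is Stage~1: extracting a cubic-curve structure from a purely combinatorial upper bound on $\tau_2$ requires the Green--Tao strengthening of Elekes-type incidence estimates, together with a Freiman-type sumset input used to promote ``many collinear triples'' to ``algebraic containment in a degree-$3$ curve''. Stage~2 is a finite case analysis on cubic curves, and Stage~3 is essentially bookkeeping once the group-theoretic picture is in place.
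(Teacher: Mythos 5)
The first thing to say is that the paper does not prove this statement: Theorem~\ref{twodimstructure} is quoted directly from Green and Tao \cite[Theorem 2.4]{GT2012}, so there is no internal argument to compare against, and any self-contained proof would have to reproduce a large part of that paper. Your three-stage outline is a recognisable caricature of the Green--Tao strategy, but two of its steps are not just unproved black boxes --- they are false as stated. In Stage~1, there is no theorem saying that ``a positive proportion of point-pairs collinear with a third point of the set'' forces all but $O(1)$ points onto a cubic: take $n/4$ points on each of four lines in general position, so that roughly a quarter of all pairs lie on $3$-rich lines, yet the set lies on no cubic and spans $\Theta(n^2)$ ordinary lines. The hypothesis actually available under ``fewer than $n-c$ ordinary lines'' is much stronger (all but $O(n)$ of the $\binom{n}{2}$ pairs are $3$-rich), and even then the passage to a cubic is not an off-the-shelf incidence estimate in the style of Elekes or Csima--Sawyer; it is the core of \cite{GT2012}, proved via Melchior's inequality (Euler's formula in the dual plane), the predominance of triangular cells in the dual arrangement, and the Chasles/Cayley--Bacharach theorem used to build the cubic locally and patch it globally.

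Stage~2 then misidentifies the target. The Böröczky sets $X_{2m}$ of Lemma~\ref{boro} consist of $m$ points on a conic together with $m$ points on a line: they lie on a conic plus a line, not on three lines, so your case analysis discards precisely the configuration you need to keep and retains one that does not arise. Relatedly, the smooth cubic case is excluded by a tangency/$3$-torsion count --- every secant of a coset of a subgroup of an elliptic curve meets the coset in a third point unless it is a tangent at a non-$3$-torsion point, so such a coset spans at least $n-3$ ordinary lines and never satisfies the hypothesis --- not by a $2$-torsion count. With Stage~1 resting on a false lemma and Stage~2 aimed at the wrong component structure, the proposal does not constitute a proof; the honest course, and the one the paper takes, is to cite \cite{GT2012} for this result.
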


The following theorem for three-dimensional space is from \cite{Ball2016}. The same conclusion but with a slightly weaker bound of $\frac{1}{3}n^2-cn$ ordinary planes was obtained in \cite{Monserrat2015} for $n$ even.

\begin{theorem} \label{threedimstructure}
There is a constant $c$ such that for $n$ sufficiently large, a set of $n$ points in PG$(3,{\mathbb R})$, spanning less than $\frac{1}{2}n^2-cn$ ordinary planes, is projectively equivalent to either a prism, a skew-prism, a prism with a point deleted, a skew prism with a point deleted, or contains four collinear points.
\end{theorem}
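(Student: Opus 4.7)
The strategy is to reduce to the planar structure theorem of Green and Tao by projection from a point of $S$, and then lift the planar information back to three-space. Assume $S$ has no four collinear points (otherwise we are in the last case of the conclusion). Fix $x \in S$ and let $S_x \subset \mathrm{PG}(2, \mathbb{R})$ denote the projection of $S \setminus \{x\}$ from $x$; under our assumption every line of $S_x$ contains at most three points. By Lemma \ref{projectS} some $x$ achieves $N_x \leqslant 3N/n < \tfrac{3}{2}n - 3c$, which is linear in $|S_x|$ but is too weak to apply Theorem \ref{twodimstructure} as stated.

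The point is that Green and Tao in fact prove a more general structural classification: any planar set with at most $Kn$ ordinary lines must lie, outside of $O_K(1)$ exceptional points, on a cubic curve, a coset of a subgroup of a smooth cubic curve, or a regular polygon configuration of the type appearing in Lemma \ref{boro}. Applied to $S_x$, this gives three structural alternatives, and the first substantive step is to rule out the two cubic ones. This is done by exploiting the fact that the pigeon-hole bound $N_x \leqslant 3N/n$ holds for a positive fraction of points $x \in S$. If many of these projections $S_x$ were of cubic type, then one could reconstruct an algebraic surface of degree at most three containing $S$ up to $O(1)$ points; a direct count of ordinary planes spanned by points on such a surface yields more than $\tfrac{1}{2}n^2 - cn$ of them, a contradiction.

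It then remains to invert the regular polygon case. The conic part of $S_x$, consisting of roughly $n/2$ points, lifts via the cone with apex $x$ to a planar slice of $S$ lying on a conic; repeating the analysis at a second vertex $y$ and intersecting the two resulting conical surfaces forces $S \setminus \{x,y\}$ to decompose as two conics in two distinct planes. Tracking how the line-part of $S_x$ lifts then distinguishes the prism from the skew prism and accounts for the possible single deleted point. The principal obstacle is this final lifting step: one must carefully identify and handle the $O(1)$ exceptional points permitted by the Green-Tao classification, and rule out planar configurations that look prism-like only approximately without being projectively equivalent to the genuine prism examples of Lemma \ref{prism}.
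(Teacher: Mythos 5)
You should first be aware that the paper does not prove Theorem~\ref{threedimstructure} at all: it is imported verbatim from \cite{Ball2016} (with a weaker version credited to \cite{Monserrat2015}), so there is no in-paper argument to compare yours against. Your outline does follow the strategy of the cited proof --- project from a point of $S$, invoke the \emph{full} Green--Tao structure theorem for sets with at most $Kn$ ordinary lines (not merely Theorem~\ref{twodimstructure}), eliminate the cubic-curve alternatives, and lift the regular-polygon alternative to two conics in two planes. So the route is the right one. But what you have written is a plan, not a proof, and the two places where you wave your hands are precisely where the real difficulty sits.

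First, a local error: from ``no four points of $S$ collinear'' you conclude that ``every line of $S_x$ contains at most three points.'' A line of $S_x$ is the trace of a \emph{plane} through $x$, which may contain arbitrarily many points of $S$; what the collinearity hypothesis actually controls is the multiplicity of the points of $S_x$ (each line through $x$ carries at most two further points of $S$). You need to address multiple points of the projection explicitly before Green--Tao can be applied, and also handle the alternative in their classification in which all but $O(K)$ points of $S_x$ lie on a single line (which pulls back to all but $O(1)$ points of $S$ lying on a plane through $x$). Second, and more seriously, the elimination of the cubic cases is asserted rather than argued. Knowing that $S_x$ lies on a cubic for many apexes $x$ only places $S$ on an intersection of cubic cones with distinct vertices; this is not ``an algebraic surface of degree at most three containing $S$,'' and extracting a low-degree curve or surface from such intersections (Bezout-type arguments, and in the coset-of-a-smooth-cubic case a computation with the group law to count the ordinary planes such a configuration must span) is the bulk of the work in \cite{Ball2016}. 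Until those two steps and the final lifting step (which you yourself flag as the principal obstacle) are carried out, the proposal does not constitute a proof.
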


The previous theorem has the following corollary. 

\begin{theorem} \label{ddimstructure}
There is a constant $c$ such that for $n$ sufficiently large, a set $S$ of $n$ points in PG$(d,{\mathbb R})$, $d \geqslant 4$, spanning less than $\frac{3}{d!}(n^{d-1}-cn^{d-2})$ ordinary hyperplanes, contains $d+1$ points that do not span a hyperplane.
\end{theorem}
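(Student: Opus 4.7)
The plan is to induct on $d$, taking $d=4$ as the base case where Theorem~\ref{threedimstructure} plays the role of the inductive hypothesis. The workhorse is the projection bound of Lemma~\ref{projectS}: given $S$ with $N$ ordinary hyperplanes, pick $x \in S$ achieving $N_x \leqslant dN/n$ and consider $S_x \subset \mathrm{PG}(d-1,\mathbb{R})$. The hypothesis that any $d$ points of $S$ span a hyperplane forbids three collinear points in $S$, so $|S_x| = n-1$; since $S$ spans $\mathrm{PG}(d,\mathbb{R})$, the image $S_x$ spans $\mathrm{PG}(d-1,\mathbb{R})$; and because every set of $d$ points of $S$ containing $x$ spans a hyperplane of $\mathrm{PG}(d,\mathbb{R})$, every set of $d-1$ points of $S_x$ spans a hyperplane of $\mathrm{PG}(d-1,\mathbb{R})$. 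Thus $S_x$ satisfies the hypotheses needed to apply the dimension-$(d-1)$ statement.

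Next I would assume $N < \frac{3}{d!}(n^{d-1} - c_d n^{d-2})$ with $c_d$ to be chosen, so
$$
N_x \leqslant \frac{dN}{n} < \frac{3}{(d-1)!}\bigl(n^{d-2} - c_d n^{d-3}\bigr).
$$
Comparing this with the threshold $\frac{3}{(d-1)!}\bigl((n-1)^{d-2} - c_{d-1}(n-1)^{d-3}\bigr)$ needed to invoke the statement at dimension $d-1$, and expanding $(n-1)^{d-2} = n^{d-2} - (d-2)n^{d-3} + O(n^{d-4})$, one sees that the recursion $c_d \geqslant c_{d-1} + (d-2)$ (with the lower-order discrepancy absorbed into the ``$n$ sufficiently large'' qualifier) carries the induction through. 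The starting constant $c_3$ is supplied by Theorem~\ref{threedimstructure}.

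Applying the inductive hypothesis for $d \geqslant 5$, or Theorem~\ref{threedimstructure} for $d = 4$, to $S_x$ now yields $d$ points of $S_x$ lying in a common hyperplane of $\mathrm{PG}(d-1,\mathbb{R})$. In the base case this requires unpacking the possibilities listed in Theorem~\ref{threedimstructure}: four collinear points are trivially coplanar, and in every prism variant (possibly skew and/or with one point deleted) each base polygon consists of $\lfloor(n-1)/2\rfloor \geqslant 4$ coplanar points once $n$ is large, so $S_x$ always contains four coplanar points. Lifting through the projection, the $d$ coplanar points of $S_x$ are images of $d$ points of $S \setminus \{x\}$ which together with $x$ lie in the cone from $x$ over the containing hyperplane of $\mathrm{PG}(d-1,\mathbb{R})$; this cone is a hyperplane of $\mathrm{PG}(d,\mathbb{R})$, so we exhibit $d+1$ points of $S$ in a common hyperplane, which is the desired conclusion. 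I expect the main obstacle to be the base-case case analysis — checking that every structure enumerated in Theorem~\ref{threedimstructure}, including the ``skew-prism'' and point-deleted variants, contains at least four coplanar points for large $n$ — together with making the constant recursion fully explicit.
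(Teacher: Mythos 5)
Your overall skeleton --- induction on $d$ with Lemma~\ref{projectS} driving the step and Theorem~\ref{threedimstructure} supplying the base case, together with the explicit recursion $c_d \geqslant c_{d-1}+(d-2)$ --- is the same as the paper's, and your constant bookkeeping is if anything more careful. But there is a genuine gap in the base case, stemming from a misreading of the conclusion. ``Contains $d+1$ points that do not span a hyperplane'' does not mean ``$d+1$ points lying in a common hyperplane''; it means $d+1$ points whose span has dimension at most $d-2$. This is the reading used in the opening paragraph of the paper's proof, and it is the only reading under which the theorem is usable in Theorem~\ref{upperlower}: the trivial example of Lemma~\ref{trivex} has $n-1$ points in a hyperplane while still satisfying the defining hypothesis of $e_d(n)$, so ``$d+1$ points in a hyperplane'' would be a vacuous conclusion. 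Consequently, for $d=4$ you must extract from $S_x$ four \emph{collinear} points (which lift, with $x$, to five points of $S$ in a $2$-flat of PG$(4,{\mathbb R})$), not merely four coplanar points. The prism variants in Theorem~\ref{threedimstructure} contain no four collinear points, so the base case cannot be closed by locating a degenerate subset inside a prism-like projection; your plan of finding coplanar points in the base polygons produces only five points of $S$ in a hyperplane of PG$(4,{\mathbb R})$, which is not the required configuration.

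The paper closes the $d=4$ case with a counting argument that is absent from your proposal: if $x$ projects to a prism-like set then $S$ is covered by two hyperplanes through $x$, each containing about $\tfrac{1}{2}(n-1)$ points of $S$, and (since each is spanned by the many points of $S$ it contains) these two hyperplanes are the same for every such $x$. Hence every such $x$ lies in their intersection, a plane meeting $S$ in at most $3$ points by the standing assumption that every four points of $S$ span a hyperplane. So at most $3$ points of $S$ have prism-like projections; each of the remaining $n-3$ points lies on at least $\tfrac{1}{2}n^2-c'n$ ordinary hyperplanes, and summing gives $N\geqslant \tfrac{1}{4}(n-3)(\tfrac{1}{2}n^2-c'n)=\tfrac{3}{4!}n^3-O(n^2)$, the desired contradiction. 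With the conclusion read correctly your inductive step for $d\geqslant 5$ does lift properly ($d$ points of $S_x$ in a $(d-3)$-flat give, with $x$, $d+1$ points of $S$ in a $(d-2)$-flat), so the gap is confined to the base case --- but that counting step is the heart of the proof.
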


\begin{proof}
If there are $d$ points of $S$ that do not span a hyperplane, then they span a smaller dimensional subspace. If this subspace contains a further point of $S$ then $S$ contains $d+1$ points that do not span a hyperplane, which is what we want to prove. If it doesn't contain a further point of $S$ then $S$ spans an infinite number of ordinary hyperplanes. Thus, we can assume that every subset of $d$ points of $S$ spans a hyperplane and will obtain a contradiction.

We consider the case $d=4$ first. 

Let $T$ be the subset of $S$ which consists of points which project to a set of $n-1$ points in $\mathrm{PG}(3,{\mathbb R})$ spanning less than $\frac{1}{2}n^2-c'n$ ordinary planes, for some constant $c'$. 

By Theorem~\ref{threedimstructure},  for any $x \in T$, the projection $S_x$, of $S$ from $x$, is contained in two planes. Therefore, there are two hyperplanes $\pi$ and $\pi'$ containing $x$ and all the points of $S$. Since there are $\frac{1}{2}(n-1)$ points of $S$ on each of these hyperplanes, these hyperplanes do not depend on $x$. Therefore $T \subseteq \pi \cap \pi'\cap S$, and since $|\pi \cap \pi'\cap S| \leqslant 3$, by the hypothesis on $S$, we have that $|T| \leqslant 3$. Hence, $S$ spans at least $\frac{1}{4}(n-3)(\frac{1}{2}n^2-c'n)$ ordinary hyperplanes so choosing $c$ large enough, we are done.

The theorem follows from Lemma~\ref{projectS} and the pigeon hole principle.
\end{proof}

We are now in a position to prove Theorem~\ref{upperlower}.

\begin{proof} (of Theorem~\ref{upperlower}.)

The asymptotic value of $e_2(n)$ follows from Lemma~\ref{boro} and Theorem~\ref{twodimstructure}.

The asymptotic value of $e_3(n)$ follows from Lemma~\ref{prism} and Theorem~\ref{threedimstructure}.

The asymptotic bounds on $e_d(n)$, $d \geqslant 4$ follow from Lemma~\ref{trivex} and Theorem~\ref{ddimstructure}.

\end{proof}

To complete this section we prove a specific structural theorem which we will require in the proof of Theorem~\ref{e3nine}.

\begin{theorem} \label{e28}
A set of eight points in PG$(2,{\mathbb R})$ spanning four ordinary lines is projectively equivalent to Example~\ref{boro}.
\end{theorem}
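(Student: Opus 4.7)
The plan is two-step: first to pin down the line-type vector $(\tau_i)$ of $S$ using Lemma~\ref{trivcount}, and then to show that the resulting combinatorial data force $S$ to be projectively equivalent to $X_8$. Applying Lemma~\ref{trivcount} with $n=8$ and $\tau_2=4$ gives
$$
3\tau_3 + 6\tau_4 + 10\tau_5 + 15\tau_6 + 21\tau_7 = 24,
$$
leaving only a short list of integer solutions to check.

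I first rule out the large-secant cases. If $\tau_k \geqslant 1$ for some $k \in \{5,6,7\}$, then counting the lines joining the $8-k$ off-line points to the $k$ on-line points produces many $2$-secants: a $7$-secant already forces $\tau_2 \geqslant 7$, and a direct case-split on whether the two points off a $6$-secant are collinear with an on-line point forces $\tau_2 \geqslant 10$, while $k=5$ has no integer solution at all. Next, if $\tau_4 \geqslant 2$, I double count: letting $r_x$ be the number of $4$-secants through $x\in S$, $\sum_x r_x = 4\tau_4$ and $\sum_x \binom{r_x}{2} \leqslant \binom{\tau_4}{2}$, since two $4$-secants meet in at most one $S$-point. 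Convexity of $\binom{\cdot}{2}$ combined with the integrality of $r_x$ gives $\sum_x \binom{r_x}{2} \geqslant 4 > \binom{\tau_4}{2}$ for $\tau_4 \in \{3,4\}$, while for $\tau_4=2$ a direct case-split on whether the two $4$-secants meet in an $S$-point caps the number of $3$-secants at three. Finally, for $\tau_4=0$ (so $\tau_3=8$), the same incidence count forces every $S$-point to lie on exactly one ordinary line and three $3$-secants, so the eight $3$-secants realise a combinatorial $(8_3)$-configuration on $S$; the unique such configuration is the M\"obius--Kantor one, which is known not to be realisable by straight lines in $\mathrm{PG}(2,\mathbb{R})$. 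This leaves $(\tau_2,\tau_3,\tau_4)=(4,6,1)$.

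For the structural step, I apply a projective transformation sending the unique $4$-secant to the line at infinity, so that $S = \{A_1,\ldots,A_4\} \cup \{B_1,\ldots,B_4\}$ with the $A_i$ at infinity and the $B_j$ affine. Any line through two of the $A_i$ is the line at infinity, so every $3$-secant is of type $1A+2B$ or $0A+3B$. A collinear triple among the $B_j$ cannot have any $A_i$ as its direction (else a second $4$-secant would appear), and then the remaining $3$-secants would all have to involve $B_4$, yielding at most $1+3<6$ three-secants. Hence no three of the $B_j$ are collinear and all six pairs of $B$'s have their direction in $\{A_1,\ldots,A_4\}$. Packing six pair-directions into only four slots forces exactly two of the three complementary pairs of edges of the quadrilateral $B_1 B_2 B_3 B_4$ to be parallel, which is equivalent to the $B_j$ forming a parallelogram whose two pairs of parallel sides and two (necessarily non-parallel) diagonals supply the four required directions. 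Since any parallelogram is affinely, hence projectively, equivalent to a square, $S$ is projectively equivalent to $X_8$.

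The hard part is disposing of the spurious case $(\tau_3,\tau_4)=(8,0)$: unlike every other candidate vector it admits no arithmetic obstruction and must be excluded via the real-unrealisability of the M\"obius--Kantor $(8_3)$-configuration. Every other step reduces to elementary incidence counts and the affine classification of parallelograms.
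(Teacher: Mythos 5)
Your proposal is correct in substance and follows the same two--part skeleton as the paper's proof: constrain the secant distribution via Lemma~\ref{trivcount}, kill the case $\tau_4=0$, $\tau_3=8$ by recognising it as an $(8_3)$-configuration that cannot live in $\mathrm{PG}(2,\mathbb{R})$, then send a $4$-secant to infinity and show the four affine points form a parallelogram. The genuine differences are in execution. Where the paper disposes of the $(8_3)$ case by an explicit enumeration of the possible pencils of $3$-secants through the points of a $2$-secant (eliminating three of the four diagrams by exhibiting forced $4$-secants) followed by a coordinate computation reducing to the real-unsolvable $b^2-ab+a^2=0$, you quote the classical uniqueness of the $(8_3)$-configuration (M\"obius--Kantor) and its non-realisability over $\mathbb{R}$; this is legitimate and much shorter, but it outsources exactly the content the paper proves by hand, so a self-contained version would have to reproduce one of the standard arguments --- which is essentially the paper's coordinate calculation. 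Conversely, you do more than the paper in the first part: you pin down $(\tau_2,\tau_3,\tau_4)=(4,6,1)$ exactly, ruling out $\tau_4\geqslant 2$ by double counting incidences between points and $4$-secants, whereas the paper only establishes the existence of a $4$-secant and lets the direction count at infinity do the rest. Your extra step pays off: knowing the $4$-secant is unique and that no three affine points are collinear makes the ``six pair-directions into four slots'' argument airtight, and it quietly covers a sub-case (a direction determined three times by the affine points) that the paper's $4$/$2$/$0$ trichotomy passes over in silence. One slip to repair: the displayed chain $\sum_x \binom{r_x}{2}\geqslant 4 > \binom{\tau_4}{2}$ is false as stated for $\tau_4=4$, since $\binom{4}{2}=6$; there the convexity bound gives $\sum_x\binom{r_x}{2}\geqslant 8>6$, so the conclusion stands but the inequality you wrote does not.
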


\begin{proof}
The proof is divided in two parts, in the first part we prove that $S$ must have a 4-secant, and in the second part we prove that $S$, up to a projective transformation, is the regular polygon example from Lemma~\ref{boro}.

Suppose that $\tau_4 = 0$. Since $\tau_2 = 4$, Lemma~\ref{trivcount} gives
$$
24 = 3\tau_3 + 10\tau_5 + 15\tau_6 + 21\tau_7.
$$
We have that $\tau_5 =\tau_6 =\tau_7 =0$, because if $\tau_7 =1$, then $\tau_3 =1$, but a configuration with one 7-secant and one 3-secant has at least 9 points. If $\tau_6 = 1$, then $\tau_3 = 3$, but a configuration with one 6-secant and three 3-secants has at least 10 points. If $\tau_5 \geqslant 1$, then the diophantine equation has no solution. So, $\tau_5 = \tau_6 = \tau_7 = 0$, and the equation implies $\tau_3 = 8.$

Any point of $S$ is incident with at most three 3-secants, since if not, then $S$ would have at least nine points.

Since there are eight 3-secants and eight points, and each point is incident with at most three 3-secants, each point is incident with exactly three 3-secants. There are four 2-secants, so each point of $S$ is incident with one 2-secant.  

Let us suppose that the points of $S$ are labelled $x_1$ to $x_8$ and that $\{x_{2j-1},x_{2j}\}$ is a 2-secant, for $j=1,2,3,4$.

Then, up to relabelling, there are four possible distrubtions for the pencils of 3-secants incident with the point $x_1$ and the point $x_2$, see Figure~\ref{e2eight}.

\begin{figure}[h]
\centering
\includegraphics[width=2.4 in]{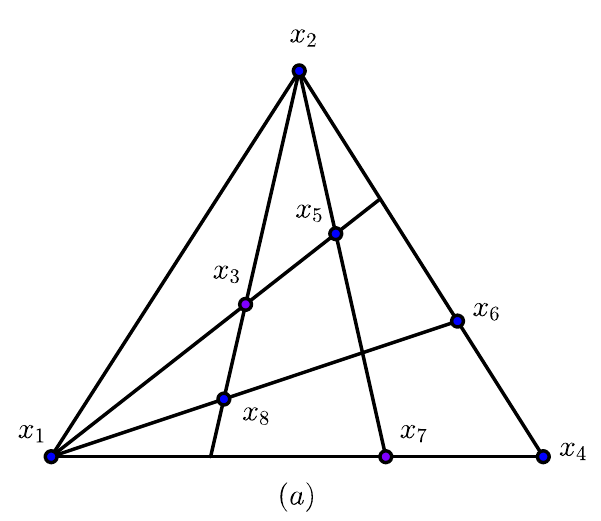}
\includegraphics[width=2.4 in]{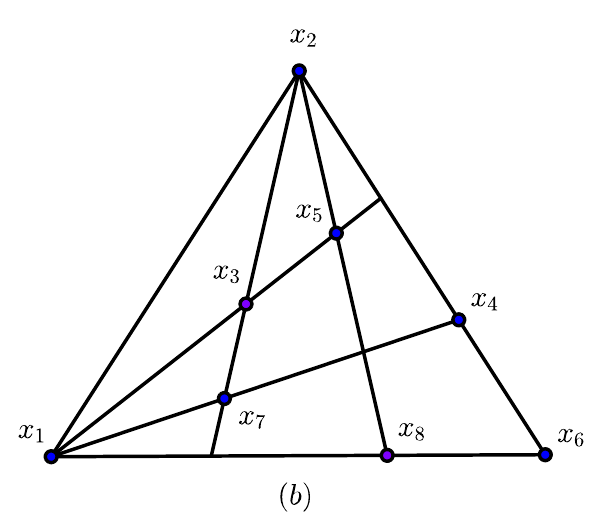}
\includegraphics[width=2.4 in]{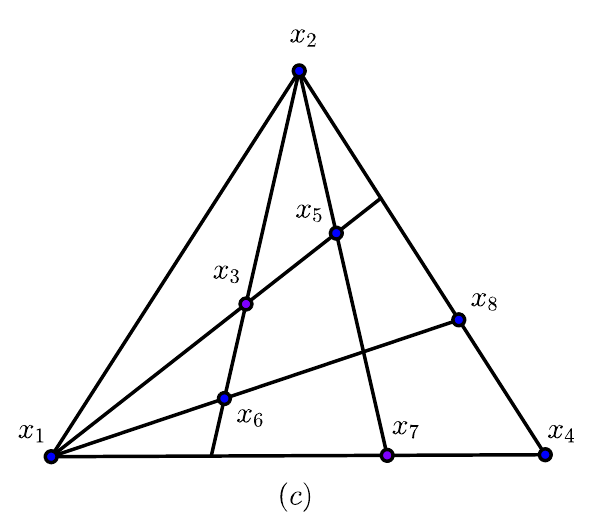}
\includegraphics[width=2.4 in]{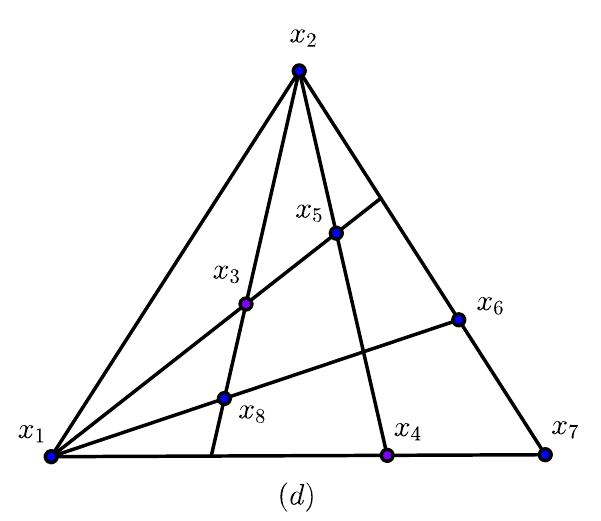}
\caption{The possible 3-secant distributions for lines incident with $x_1$ and $x_2$.}
\label{e2eight}
\end{figure}

The configuration of Figure~\ref{e2eight} $(b)$ is impossible, because the line pencil of $x_6$ would contain $\{x_3,x_6,x_7\}$, since $\{x_5,x_6\}$ is a $2$-secant, so there would be a 4-secant, $\{x_2, x_3,x_6,x_7\}$. The configuration of Figure~\ref{e2eight} $(c)$ is also impossible, since there would be a 4-secant, $\{x_1, x_4,x_6,x_7\}$. And the configuration of Figure~\ref{e2eight} $(d)$, because the line pencil of $x_6$ would be $\{x_5,x_6\}$, $\{x_1,x_6,x_8\}$, $\{x_2,x_6,x_7\}$, and $\{x_3,x_4,x_6\}$, contradicting the fact that $\{x_3,x_4\}$ is a 2-secant. So the only possible configuration is the one in Figure~\ref{e2eight} $(a)$, and has the lines incident with $\{x_1,x_2\}$, $\{x_3,x_4\}$, $\{x_5,x_6\}$, $\{x_7,x_8\}$, $\{x_1,x_3,x_5\}$, $\{x_1,x_8,x_6\}$, $\{x_1,x_7,x_4\}$, $\{x_2,x_3,x_8\}$, $\{x_2,x_5,x_7\}$, $\{x_2,x_6,x_4\}$, $\{x_3,x_6,x_7\}$ and $\{x_4,x_5,x_8\}$.

Now, we have to see that this configuration cannot be embedded in the plane. By the fundamental theorem of projective geometry, there is a projectivity that transforms the configuration of Figure~\ref{e2eight} to the points with coordinates
$$
x_1 = (0,0,1),\ x_2 = (0,1,0),\ x_3 = (1,0,0),\ x_4 = (1,1,1).
$$
Thus, by the relations of incidence we deduce that the other points have coordinates
$$
x_5 = (a,a,b),\ x_6 = (b-a, b, 0),\ x_7 = (a,0,b),\ x_8 = (b-a, b, b-a),
$$
where $a, b \in {\mathbb R} \setminus \{0\}$. The points $x_4$, $x_5$ and $x_8$ are collinear, so the determinant of these three points should be equal to zero,
$b^2-ab + a^2 = 0$. But this equation doesn't have a solution with $a, b \in {\mathbb R} \setminus \{0\}$.

Therefore, $S$ must have a $4$-secant, which by applying a suitable transformation we can assume is the line at infinity. The four affine points determine six (possibly repeated) directions and at least four distinct directions. A point of $S$ on the line at infinity is incident with $4$, $2$ or $0$ ordinary lines depending on whether it corresponds to direction determined by the affine points of $S$, zero, once or twice, respectively. Since $S$ spans only four ordinary lines, we have that the affine points of $S$ determine four distinct directions and the four affine points are affinely equivalent to the vertices of a square, so $S$ is projectively equivalent to the regular polygon example of Lemma~\ref{boro}.
\end{proof}

\section{The value of $e_d(n)$ for small $d$ and $n$}
%


\begin{lemma} \label{ints}
Let $S$ be a set of points of $\mathrm{PG}(d,{\mathbb R})$ with the property that every $d$-subset of $S$ spans a hyperplane. Let $T$ be a $(d+2)$-subset of $S$ spanning the whole space. There is at most one $(d+1)$-subset of $T$ which spans a hyperplane (the others span the whole space).
\end{lemma}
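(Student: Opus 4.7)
The plan is to argue by contradiction: suppose two distinct $(d+1)$-subsets of $T$ each span only a hyperplane, and derive that $T$ must itself lie in a hyperplane, contradicting the assumption that $T$ spans $\mathrm{PG}(d,\mathbb{R})$.

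Concretely, write $T = \{p_1, \ldots, p_{d+2}\}$ and suppose, for the sake of contradiction, that there exist indices $i \neq j$ such that $T \setminus \{p_i\}$ spans a hyperplane $H_i$ and $T \setminus \{p_j\}$ spans a hyperplane $H_j$. The key observation is that both $H_i$ and $H_j$ contain the $d$-subset $T \setminus \{p_i, p_j\}$. By the standing hypothesis on $S$, any $d$ points of $S$ span a hyperplane; in particular $T \setminus \{p_i, p_j\}$ spans a unique hyperplane of $\mathrm{PG}(d,\mathbb{R})$.

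The rest is a short dimension count. Since two distinct hyperplanes of $\mathrm{PG}(d,\mathbb{R})$ intersect in a $(d-2)$-dimensional subspace, which is too small to contain a spanning $d$-subset of a hyperplane, we must have $H_i = H_j$. But $H_i$ contains $p_j$ (as $p_j \in T \setminus \{p_i\}$) and $H_j$ contains $p_i$, so this common hyperplane contains every point of $T$, contradicting the hypothesis that $T$ spans $\mathrm{PG}(d,\mathbb{R})$. Hence at most one $(d+1)$-subset of $T$ can fail to span the whole space.

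There isn't really a serious obstacle here; the entire argument hinges on the single observation that any two $(d+1)$-subsets of the $(d+2)$-set $T$ overlap in exactly $d$ points, and that this overlap, by hypothesis, already pins down a unique hyperplane. The only thing to watch is to use both the ``every $d$-subset spans a hyperplane'' hypothesis (to get uniqueness of the hyperplane through the $d$ common points) and the hypothesis that $T$ itself spans the whole space (to get the final contradiction).
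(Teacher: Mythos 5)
Your proof is correct and follows essentially the same route as the paper: both arguments rest on the observation that two $(d+1)$-subsets of $T$ meet in exactly $d$ points, which by hypothesis span a hyperplane and hence cannot lie in the codimension-two intersection of two distinct hyperplanes. Your write-up is if anything slightly more complete, since you also explicitly dispose of the case $H_i = H_j$ using the hypothesis that $T$ spans the whole space, a step the paper leaves implicit.
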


\begin{proof}
Suppose $Q_1$ and $Q_2$ are two $(d+1)$-subsets of $T$ which span distinct hyperplanes. Then $Q_1 \cap Q_2$ is a $d$-subset of $S$. Since $Q_1$ and $Q_2$ span distinct hyperplanes $Q_1 \cap Q_2$ does not span a hyperplane, contradicting the hypothesis on $S$.
\end{proof}

Recall that for a set of points $S$, we defined $\tau_i$ to be the number of hyperplanes containing $i$ points of $S$. 

\begin{lemma} \label{bettercount}
$$
 \sum_{i=1}^{n-d-1} (n-d-i) {d+i \choose i-1} \tau_{d+i} \leqslant {n \choose d+2}.
$$
\end{lemma}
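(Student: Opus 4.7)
My plan is a double-counting argument on ordered triples $(Q,\pi,x)$, where $\pi$ is a hyperplane of $\mathrm{PG}(d,\mathbb{R})$ with $|\pi \cap S| \geqslant d+1$, $Q$ is a $(d+1)$-subset of $\pi \cap S$, and $x$ is a point of $S$ not lying on $\pi$. The left-hand side of the inequality will arise from counting these triples by first fixing $\pi$, and the right-hand side will come from an injection from the set of triples into the collection of $(d+2)$-subsets of $S$.

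For the direct count, I would group hyperplanes by the size of their intersection with $S$: a hyperplane $\pi$ with $|\pi \cap S|=d+i$ contributes ${d+i \choose d+1}(n-d-i) = {d+i \choose i-1}(n-d-i)$ triples. Summing over $i$ from $1$ (the smallest value for which $|\pi\cap S| \geqslant d+1$) up to $n-d-1$ (since $\pi$ cannot contain all of $S$) reproduces the left-hand side exactly.

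To bound the number of triples from above, I would send each triple $(Q,\pi,x)$ to the $(d+2)$-subset $T=Q\cup\{x\}$ of $S$. Since $x \notin \pi$ while $Q \subseteq \pi$, the set $T$ spans the whole space, so Lemma~\ref{ints} applies to $T$. The claim is that this map is injective, which would immediately yield the upper bound ${n \choose d+2}$. The injectivity is the only substantive point: first, $Q$ itself spans a hyperplane, because $Q \subseteq \pi$ and any $d$-subset of $Q$ already spans a hyperplane by the standing hypothesis on $S$, forcing the span of $Q$ to equal $\pi$. Then Lemma~\ref{ints} tells us that $Q$ is the unique $(d+1)$-subset of $T$ spanning a hyperplane, so $Q$ is recovered from $T$ as this unique subset, $\pi$ is recovered as the hyperplane it spans, and $x$ is the remaining element of $T$. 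The one place that could trip up the argument is verifying the hypothesis of Lemma~\ref{ints}, namely that $T$ spans the whole space, but this is automatic from $x \notin \pi$ together with the fact that $Q$ already spans $\pi$.
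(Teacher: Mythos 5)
Your proof is correct and is essentially the paper's argument: the paper likewise counts, for each $(d+i)$-secant $\pi$, the $\binom{d+i}{d+1}$ choices of a $(d+1)$-subset of $\pi\cap S$ together with a point of $S\setminus\pi$, and uses Lemma~\ref{ints} to see that the resulting $(d+2)$-subsets are distinct. You have merely made the injectivity verification (recovering $Q$, $\pi$ and $x$ from $T$) explicit where the paper leaves it implicit.
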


\begin{proof}
Suppose $\pi$ is a $(d+i)$-secant, for some $i \geqslant 1$. There are ${d+i \choose d+1}$ subsets of $\pi \cap S$ of size $d+1$. For each of these $(d+1)$-subsets, if we add a point of $S\setminus \pi$ then, by Lemma~\ref{ints}, we obtain a distinct $(d+2)$-subset of $S$ spanning the whole space.
\end{proof}

The following theorem is useful only for $n\leqslant 2d$.

\begin{theorem} \label{lowerboundforsmalls}
$$
e_d(n) \geqslant {n \choose d} - \frac{d+1}{d+2}{n \choose d+1} .
$$
\end{theorem}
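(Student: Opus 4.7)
The plan is to combine Lemma~\ref{trivcount} and Lemma~\ref{bettercount} by rewriting both sums in terms of the common binomial coefficient $\binom{d+j}{d}$. Write $i=d+j$, so that Lemma~\ref{trivcount} gives
$$
\tau_d=\binom{n}{d}-\sum_{j=1}^{n-d-1}\binom{d+j}{d}\tau_{d+j},
$$
and the task reduces to producing an upper bound on the sum on the right.

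To do this I would observe the elementary identity $\binom{d+j}{j-1}=\binom{d+j}{d+1}=\tfrac{j}{d+1}\binom{d+j}{d}$. Substituting this into Lemma~\ref{bettercount} yields
$$
\sum_{j=1}^{n-d-1} j(n-d-j)\binom{d+j}{d}\tau_{d+j}\ \leqslant\ (d+1)\binom{n}{d+2}.
$$
Now the coefficients in this inequality and in Lemma~\ref{trivcount} differ only by the factor $j(n-d-j)$. Since $j(n-d-j)$ is concave in $j$ on the interval $[1,n-d-1]$, its minimum over integer $j$ in this range is attained at the endpoints and equals $n-d-1$. Pulling this uniform lower bound out gives
$$
(n-d-1)\sum_{j=1}^{n-d-1}\binom{d+j}{d}\tau_{d+j}\ \leqslant\ (d+1)\binom{n}{d+2}.
$$

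Finally, I would simplify the right-hand side using
$$
\frac{\binom{n}{d+2}}{n-d-1}=\frac{\binom{n}{d+1}}{d+2},
$$
which is immediate from the factorial formula. This yields $\sum_j \binom{d+j}{d}\tau_{d+j}\leqslant \tfrac{d+1}{d+2}\binom{n}{d+1}$, and substituting back into the expression for $\tau_d$ from Lemma~\ref{trivcount} gives the claimed bound $e_d(n)=\tau_d\geqslant \binom{n}{d}-\tfrac{d+1}{d+2}\binom{n}{d+1}$.

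There is no real obstacle here, since both hypotheses of the preliminary lemmas are exactly what this theorem assumes; the only step that requires a small insight is recognising the identity $\binom{d+j}{j-1}=\tfrac{j}{d+1}\binom{d+j}{d}$ so that Lemma~\ref{bettercount} can be compared termwise with Lemma~\ref{trivcount}, followed by the crude (but tight at the endpoints) estimate $j(n-d-j)\geqslant n-d-1$. The bound is genuinely useful only for small $n$, as the remark preceding the theorem observes, because the estimate $j(n-d-j)\geqslant n-d-1$ is very wasteful when $j$ is close to $(n-d)/2$.
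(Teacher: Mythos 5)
Your proof is correct and is essentially identical to the paper's own argument: both rewrite the coefficient in Lemma~\ref{bettercount} via $\binom{d+j}{j-1}=\tfrac{j}{d+1}\binom{d+j}{d}$, apply the endpoint estimate $j(n-d-j)\geqslant n-d-1$, and substitute into Lemma~\ref{trivcount}. No differences worth noting.
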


\begin{proof}
By Lemma~\ref{trivcount},
$$\sum_{i=0}^{n-d-1} {i+d \choose i} \tau_{i+d}= {n \choose d}.$$
So,
$$
\tau_d+\sum_{i=1}^{n-d-1} \frac{d+1}{i} {i+d \choose i-1} \tau_{i+d} = {n \choose d}.
$$
Since $(n-d-i)/(n-d-1) \geqslant 1/i$,
$$
\tau_d+\frac{d+1}{n-d-1}\sum_{i=1}^{n-d-1} (n-d-i)  {i+d \choose i-1} \tau_{i+d} \geqslant {n \choose d}.
$$
Now, use Lemma~\ref{bettercount}.
\end{proof}

\begin{theorem} \label{dplus2}
$$
e_d(d+2)={d+1 \choose 2}.
$$
\end{theorem}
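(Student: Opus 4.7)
The plan is to sandwich $e_d(d+2)$ between matching upper and lower bounds, both evaluating to $\binom{d+1}{2}$.

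For the upper bound, I would simply apply the trivial construction of Lemma~\ref{trivex} with $n=d+2$. That lemma furnishes a configuration spanning precisely $\binom{n-1}{d-1} = \binom{d+1}{d-1} = \binom{d+1}{2}$ ordinary hyperplanes, verifying that every $d$-subset is in general position, hence $e_d(d+2) \leqslant \binom{d+1}{2}$.

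For the lower bound, I would plug $n=d+2$ directly into Theorem~\ref{lowerboundforsmalls}. This gives
$$
e_d(d+2) \geqslant \binom{d+2}{d} - \frac{d+1}{d+2}\binom{d+2}{d+1} = \binom{d+2}{2} - (d+1) = (d+1)\cdot\frac{d}{2} = \binom{d+1}{2},
$$
and we are done. As a sanity check, this matches the bound one obtains directly from Lemma~\ref{trivcount} combined with Lemma~\ref{ints}: since $n=d+2$, the only possible secant sizes are $d$ and $d+1$ (no hyperplane can contain all $d+2$ points without contradicting the spanning hypothesis), so Lemma~\ref{trivcount} reduces to $\tau_d + (d+1)\tau_{d+1} = \binom{d+2}{2}$, while Lemma~\ref{ints} applied to $T=S$ forces $\tau_{d+1} \leqslant 1$; minimising $\tau_d$ means taking $\tau_{d+1}=1$, which yields exactly $\tau_d = \binom{d+1}{2}$.

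There is no real obstacle: both ingredients are already in hand in the excerpt, and the arithmetic is routine. The only mild point to mention is the verification that the bound in Theorem~\ref{lowerboundforsmalls} is tight precisely in this regime, which is why the theorem was advertised as ``useful only for $n \leqslant 2d$''; at $n=d+2$ we are at the easiest end of that range, where the argument of Lemma~\ref{ints} forbids more than one $(d+1)$-secant and hence makes the counting bound sharp.
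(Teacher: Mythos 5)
Your proposal is correct and follows exactly the paper's own argument: the paper's proof reads ``This follows from Theorem~\ref{trivex} and Theorem~\ref{lowerboundforsmalls}'', which is precisely your combination of the trivial example for the upper bound and Theorem~\ref{lowerboundforsmalls} at $n=d+2$ for the lower bound. Your additional sanity check via Lemma~\ref{trivcount} and Lemma~\ref{ints} is a correct and welcome elaboration of why the counting bound is tight here, but it is not needed beyond what the paper already invokes.
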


\begin{proof}
This follows from Theorem~\ref{trivex}  and Theorem~\ref{lowerboundforsmalls}.
\end{proof}

\begin{theorem} \label{dplus3a}
If $d$ is odd then 
$$
e_d(d+3)=\tfrac{1}{6}(d+3)(d+1)(d-1).
$$
\end{theorem}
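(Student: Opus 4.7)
The plan is to combine the lower bound $e_d(d+3) \geqslant \tfrac{1}{6}(d+3)(d+1)(d-1)$, which follows directly from Theorem~\ref{lowerboundforsmalls}, with a matching upper bound realised by an explicit ``generalised prism'' in $\mathrm{PG}(d,\mathbb{R})$ that exists precisely because $d$ is odd. Substituting $n=d+3$ into Theorem~\ref{lowerboundforsmalls} and simplifying gives
$$e_d(d+3) \geqslant \binom{d+3}{d} - \tfrac{d+1}{d+2}\binom{d+3}{d+1} = \tfrac{(d-1)(d+1)(d+3)}{6},$$
an inequality valid for all $d\geqslant 2$, so that the oddness of $d$ is needed only for the construction.

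For the upper bound, set $k=(d+3)/2$, a positive integer because $d$ is odd. Fix a linear subspace $V\subseteq\mathbb{R}^{d+1}$ with $\dim V = (d-1)/2$, and choose generic vectors $P_1,\ldots,P_k\in\mathbb{R}^{d+1}$ and $U_1,\ldots,U_k\in V$; since $k+\dim V = d+1$, genericity ensures $\mathrm{span}(P_1,\ldots,P_k)\oplus V = \mathbb{R}^{d+1}$ and the $U_i$ span all of $V$. Let $S\subset\mathrm{PG}(d,\mathbb{R})$ be the projective image of the $d+3$ vectors $P_i$ and $P_i+U_i$ for $i=1,\ldots,k$, naturally partitioned into $k$ pairs. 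The case $d=3$ recovers the prism $P_6$ of Lemma~\ref{prism} up to projective equivalence.

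The main step is to describe the kernel of the $(d+1)\times(d+3)$ matrix $M$ whose columns are the vectors chosen above. Splitting a kernel relation $\sum_i (c_{2i-1}P_i + c_{2i}(P_i+U_i)) = 0$ across the direct sum $\mathrm{span}(P_j)\oplus V$ forces $c_{2i-1}=-c_{2i}$ for every $i$ together with $\sum_i c_{2i}\,U_i = 0$. Since the $k$ generic vectors $U_i$ in the $\tfrac{1}{2}(d-1)$-dimensional space $V$ admit a $2$-dimensional space of linear dependencies, $\ker M$ is $2$-dimensional, and every nonzero $c\in\ker M$ has support equal to a union of the pairs (so of even size). A generic $c\in\ker M$ has support on all $k$ pairs (size $d+3$); for each $i$ the $1$-parameter subfamily with $c_{2i}=0$ has support on the remaining $k-1$ pairs (size $d+1$), giving exactly one ``pair-complement'' kernel vector per pair. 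Genericity of the $U_i$ inside $V$ rules out kernel vectors of smaller support, since two simultaneous pair-vanishing conditions cut out only the origin of the $2$-dimensional $\ker M$.

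From this description of $\ker M$ I can read off the intersection numbers: every $d$-subset of $S$ is independent (no kernel vector has support of size at most $d$), so every $d$ points of $S$ span a hyperplane, which verifies the standing hypothesis on $S$; the only dependent $(d+1)$-subsets are the $k$ pair complements, giving $\tau_{d+1} = k = (d+3)/2$; and $\tau_{d+2}=\tau_{d+3}=0$ because kernel vectors have even support and $S$ is not contained in a hyperplane. Substituting into Lemma~\ref{trivcount} yields
$$\tau_d = \binom{d+3}{3} - (d+1)\cdot\tfrac{d+3}{2} = \tfrac{(d-1)(d+1)(d+3)}{6},$$
matching the lower bound. The main obstacle is the careful verification of the genericity claims, specifically that the direct sum $\mathrm{span}(P_j)\oplus V = \mathbb{R}^{d+1}$ holds and that no two pair-vanishing conditions are simultaneously satisfiable in $\ker M$; each defines a dense open subset of the parameter space, so generic choices of $P_i$ and $U_i$ suffice.
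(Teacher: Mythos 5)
Your proposal is correct and follows essentially the same route as the paper: the lower bound is read off from Theorem~\ref{lowerboundforsmalls} exactly as in the text, and your ``generalised prism'' is the same combinatorial configuration as the paper's explicit example $\{u_1,\ldots,u_{d+1},u,v\}$ --- namely $d+3$ points partitioned into $(d+3)/2$ pairs whose complements are the $(d+1)$-secant hyperplanes --- after which Lemma~\ref{trivcount} closes the count identically. If anything, your kernel analysis is slightly more thorough than the paper's, since it explicitly verifies that every $d$-subset spans a hyperplane and that $\tau_{d+1}$ equals (rather than merely is at least) $(d+3)/2$, points the published proof leaves implicit.
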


\begin{proof}
By Theorem~\ref{lowerboundforsmalls}, $e_d(d+3) \geqslant \tfrac{1}{6}(d+3)(d+1)(d-1)$.

We will construct a set $S$ of $d+3$ points with $\frac{1}{6}(d+3)(d+1)(d-1)$ hyperplanes containing precisely $d$ points of $S$.

Suppose $u_1,\ldots, u_{d+1}$ are $d+1$ points of PG$(d, {\mathbb R})$ which span PG$(d,{\mathbb R})$.

Let
$$
S=\{ u_1,\ldots, u_{d+1},u,v  \},
$$
where 
$$
u=u_1+\cdots+u_d,
$$ 
$$
v=\alpha_1 (u_1+u_2)+\cdots+\alpha_{(d-1)/2}(u_{d-2}+u_{d-1})+u_{d+1}
$$ 
and $\alpha_1,\ldots,\alpha_{(d-1)/2}$ are distinct elements of ${\mathbb R}$.

The hyperplanes $\langle u,u_1,\ldots, u_{d} \rangle$ and $\langle v,u_1,\ldots, u_{d-1}, u_{d+1} \rangle$ contain $d+1$ points of $S$. Furthermore,
$$
v-\alpha_1u=(\alpha_2-\alpha_1)(u_3+u_4)+\cdots+(\alpha_{(d-1)/2}-\alpha_1)(u_{d-2}+u_{d-1})+u_{d+1},
$$
so $\langle u,v,u_3,\ldots,u_{d+1} \rangle$ is also a hyperplane containing $d+1$ points of $S$. Similarly, by considering $v-\alpha_i u$ for $i=2,\ldots,(d-1)/2$, we find a further $(d-3)/2$ hyperplanes containing $d+1$ points of $S$. Hence
$$
\tau_{d+1} \geqslant (d+3)/2,
$$
and then Lemma~\ref{trivcount} gives
$$
\tau_d \leqslant  \tfrac{1}{6}(d+3)(d+1)(d-1).
$$
Hence, $\tau_d = \tfrac{1}{6}(d+3)(d+1)(d-1)$.
\end{proof}

\begin{theorem} \label{dplus3b}
If $d$ is even then 
$$
e_d(d+3)={d+2 \choose 3}.
$$
\end{theorem}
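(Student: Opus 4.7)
The upper bound $e_d(d+3)\leqslant\binom{d+2}{3}$ is just the trivial upper bound: Lemma~\ref{trivex} applied with $n-1=d+2$ suitably general points in a hyperplane plus one extra point off it gives a configuration with exactly $\binom{d+2}{d-1}=\binom{d+2}{3}$ ordinary hyperplanes. All the work is in the matching lower bound.

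For the lower bound I would specialise the counting identity of Lemma~\ref{trivcount} to $n=d+3$, obtaining
$$\tau_d+(d+1)\,\tau_{d+1}+\binom{d+2}{2}\tau_{d+2}=\binom{d+3}{3}.$$
Pascal's identity then rewrites the target $\tau_d\geqslant\binom{d+2}{3}$ as
$$(d+1)\,\tau_{d+1}+\binom{d+2}{2}\tau_{d+2}\leqslant\binom{d+2}{2},$$
so the problem reduces to simultaneously bounding the number of $(d+1)$- and $(d+2)$-secant hyperplanes of $S$.

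The key geometric input is that any two distinct hyperplanes of $\mathrm{PG}(d,\mathbb{R})$ meet in a $(d-2)$-flat, which by the spanning hypothesis on $S$ contains at most $d-1$ points of $S$ (else those $d$ points would fail to span a hyperplane). Combined with $|S|=d+3$, inclusion--exclusion gives the uniform bound $|\pi\cap S|+|\pi'\cap S|\leqslant 2d+2$ for any two distinct hyperplanes $\pi,\pi'$. From this single inequality I would extract three facts: (i) $\tau_{d+2}\leqslant 1$, because $2(d+2)>2d+2$; (ii) a $(d+2)$-secant and a $(d+1)$-secant cannot coexist, because $(d+2)+(d+1)>2d+2$; and (iii) any two distinct $(d+1)$-secants meet with equality in the above bound, so they share exactly $d-1$ points of $S$ and the complementary $2$-subsets of $S$ that they omit are disjoint.

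To finish I would split on $\tau_{d+2}$. If $\tau_{d+2}=1$ then (ii) forces $\tau_{d+1}=0$ and the counting identity already yields $\tau_d=\binom{d+2}{3}$ on the nose. If $\tau_{d+2}=0$ then by (iii) the $(d+1)$-secants are indexed by a family of pairwise disjoint $2$-subsets of a $(d+3)$-element set, so $\tau_{d+1}\leqslant\lfloor(d+3)/2\rfloor$, and this floor equals $(d+2)/2$ \emph{precisely because $d$ is even}, after which substitution gives the required inequality. The only real subtlety — and the whole reason the even case differs from the odd case — is this parity-driven floor: for $d$ odd one extra disjoint pair fits into $S$, and the construction in the proof of Theorem~\ref{dplus3a} realises it, so for even $d$ the bound $\tau_{d+1}\leqslant(d+2)/2$ is genuinely tighter by one and shifts the answer from $\frac{(d-1)(d+1)(d+3)}{6}$ up to $\binom{d+2}{3}$.
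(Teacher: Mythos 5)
Your proof is correct, and its skeleton is the same as the paper's: establish the upper bound via Lemma~\ref{trivex}, then split on $\tau_{d+2}$ and feed bounds on $\tau_{d+1}$ and $\tau_{d+2}$ into the identity of Lemma~\ref{trivcount}. Where you diverge is in how those bounds are obtained. The paper specialises its Lemma~\ref{bettercount} (itself a consequence of Lemma~\ref{ints}) to $n=d+3$, getting the single weighted inequality $2\tau_{d+1}+(d+2)\tau_{d+2}\leqslant d+3$, from which $\tau_{d+2}\leqslant 1$, the exclusion of $(d+1)$-secants when $\tau_{d+2}=1$, and $\tau_{d+1}\leqslant (d+3)/2$ all drop out at once. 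You instead argue directly from the fact that two distinct hyperplanes meet $S$ in at most $d-1$ common points, and in the case of two $(d+1)$-secants you extract the equality case to conclude that the complementary $2$-subsets form a partial matching on the $d+3$ points, giving $\tau_{d+1}\leqslant\lfloor(d+3)/2\rfloor$. The two routes rest on the same underlying geometric fact and produce identical numerical bounds; yours is self-contained and makes the parity mechanism (why $d$ even gains exactly one over $d$ odd) more transparent, while the paper's has the advantage of reusing Lemma~\ref{bettercount}, which also does the work in Theorem~\ref{lowerboundforsmalls} and in the bound $e_4(8)\geqslant 25$.
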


\begin{proof}
Lemma~\ref{bettercount} implies
$$
2\tau_{d+1}+(d+2)\tau_{d+2} \leqslant d+3.
$$
If $\tau_{d+2}=1$ then $\tau_{d+1}=0$ and Lemma~\ref{trivcount} implies 
$$
\tau_{d}={d+2 \choose 3}.
$$
If $\tau_{d+2}=0$ then since $d$ is even $\tau_{d+1} \leqslant (d+2)/2.$
Lemma~\ref{trivcount} implies
$$
\tau_d={d+3 \choose 3}-(d+1)\tau_{d+1}.
$$
Combining this with the above inequality gives 
$$
\tau_d \geqslant {d+2 \choose 3}.
$$
By Lemma~\ref{trivex}, 
$$
e_d(d+3) \leqslant {d+2 \choose 3}.
$$
\end{proof}

In the following table we list the values (or possible rangle of values) of $e_d(n)$, for small $n$ and $d$. The columns are indexed by $d$ and the rows by $n$. The column corresponding to $d=2$ comes from \cite{BM1990}. Any other entry which does not follow directly from Lemma~\ref{project}, Lemma~\ref{boro}, Lemma~\ref{trivex}, Theorem~\ref{dplus2}--~\ref{dplus3b} is justified below.

\bigskip



\begin{center}
\begin{tabular}{c|cccccc}
 & 2 & 3 & 4 & 5 & 6 & 7\\
\hline
4 & 3 & . & . & . & . & . \\
5 & 4 & 6 & . & . & . & . \\
6 & 3 & 8 & 10 & . & . & . \\
7 & 3 & 11 & 20 & 15 & . & . \\
8 & 4 & 8 & 25...35 & 32 & 21 & . \\
9 & 6 & 14..22 & 18...56 & 54...70 & 56 & 28 \\
10 & 5 & 20 & 35...84 & 36...126 & 90...126 & 80 \\
11 & 6 & 19...31 & 55...120 & 77...210 & . & .\\
12  & 6 & 24 & 57...165 & 132...330 & . & . \\
13 & 6 & 26...51 & 78...220 & 149...495 & . & . \\
\label{smallvalues}
\end{tabular}

The value of $e_d(n)$ for small $d$ and $n$.

\end{center}

%


\begin{theorem}
$$
e_3(7)=11.
$$
\end{theorem}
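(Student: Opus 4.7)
The plan is to establish $e_3(7) \leq 11$ by an explicit construction and $e_3(7) \geq 11$ by a combinatorial reduction followed by ruling out a Fano-type extremal configuration, which will be the main obstacle.

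For the lower bound, suppose $S$ has 7 points with $\tau_3 \leq 10$. Lemma~\ref{trivcount} gives $\tau_3 + 4\tau_4 + 10\tau_5 + 20\tau_6 = 35$. The case $\tau_6 = 1$ forces six points to be coplanar; the seventh point then creates $\binom{6}{2} = 15$ ordinary planes, contradicting $\tau_3 \leq 10$. If $\tau_5 \geq 2$, two distinct 5-secants share at least three collinear points of $S$, again a contradiction. If $\tau_5 = 1$ then Lemma~\ref{bettercount} forces $3\tau_4 + 10 \leq 21$, so $\tau_4 \leq 3$ and $\tau_3 \geq 13$. The only remaining possibility is $\tau_5 = \tau_6 = 0$, and then $\tau_3 \leq 10$ together with Lemma~\ref{bettercount} ($3\tau_4 \leq 21$) pins down $\tau_4 = 7$ and $\tau_3 = 7$. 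Since every 5-subset of $S$ now spans PG$(3, \mathbb{R})$, Lemma~\ref{ints} says each contains at most one coplanar 4-subset; the count $7 \cdot 3 = 21 = \binom{7}{5}$ forces equality, so every 5-subset contains exactly one coplanar 4-subset. Passing to complements, the seven 3-subsets complementary to the coplanar 4-subsets form a Steiner triple system $S(2,3,7)$, that is, a Fano plane on $S$.

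The main obstacle is ruling out this Fano configuration in PG$(3, \mathbb{R})$, which I would handle by a coordinate computation in the spirit of Theorem~\ref{e28}. Label the points of $S$ so that the Fano lines are $\{1,2,3\}, \{1,4,5\}, \{1,6,7\}, \{2,4,6\}, \{2,5,7\}, \{3,4,7\}, \{3,5,6\}$; the seven coplanar 4-subsets of $S$ are then their complements. Since $\{1,2,3,6\}$ is not among these, $P_1, P_2, P_3, P_6$ span PG$(3, \mathbb{R})$, and after a projectivity we may take them to be the standard basis $e_1, e_2, e_3, e_4$ of $\mathbb{R}^4$. The coplanarities $\{1,3,4,6\}, \{1,2,5,6\}, \{2,3,6,7\}$ confine $P_4, P_5, P_7$ respectively to the coordinate hyperplanes $x_2 = 0$, $x_3 = 0$, $x_1 = 0$, so we may write $P_4 = (a, 0, b, c)$, $P_5 = (d, f, 0, g)$, $P_7 = (0, h, i, j)$. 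The no-three-collinear hypothesis forces every one of $a, b, c, d, f, g, h, i, j$ to be nonzero, since otherwise one of $P_4, P_5, P_7$ would lie on a line spanned by two of the $e_i$ and hence be collinear with two of $P_1, P_2, P_3, P_6$. The remaining four coplanarities $\{1,2,4,7\}, \{1,3,5,7\}, \{2,3,4,5\}, \{4,5,6,7\}$ expand to the determinantal identities $ci = bj$, $gh = fj$, $cd = ag$ and $afi + bdh = 0$. After individually rescaling $P_4, P_5, P_7$ to set $c = g = j = 1$, the first three identities give $i = b$, $h = f$, $d = a$; substituting into the fourth yields $2abf = 0$, which contradicts $a, b, f \neq 0$ in $\mathbb{R}$. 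Hence $\tau_4 = 7$ is impossible, so $\tau_4 \leq 6$ and $\tau_3 \geq 35 - 24 = 11$.

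For the upper bound, I would exhibit a 7-point configuration realising the dual of the real non-Fano matroid: taking the columns of $[-B^T \mid I_4]$, where $[I_3 \mid B]$ is the standard real representation of the non-Fano matroid, produces seven points in PG$(3, \mathbb{R})$ with no three collinear, exactly six coplanar 4-subsets (the complements in $\{1,\ldots,7\}$ of the six collinear triples of the non-Fano configuration) and no five coplanar, and Lemma~\ref{trivcount} then gives $\tau_3 = 35 - 4 \cdot 6 = 11$.
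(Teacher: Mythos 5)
Your proposal is correct, and it reaches $e_3(7)=11$ by a genuinely different route. The paper fixes a $4$-secant plane $\pi$, lets $\pi'$ be the plane spanned by the remaining three points, and runs a geometric case analysis on $|S\cap\pi\cap\pi'|\in\{0,1,2\}$, bounding $\tau_4$ directly in each case (invoking, in the first case, the non-realizability of the Fano plane inside $\pi$); the matching example is the cube with a vertex deleted. You instead use pure counting: Lemma~\ref{trivcount} and Lemma~\ref{bettercount} dispose of every case except $\tau_5=\tau_6=0$, $\tau_4=7$, $\tau_3=7$, and then the double count against Lemma~\ref{ints} ($7\cdot 3=\binom{7}{5}$) forces the seven coplanar $4$-sets to be the complements of the lines of a Fano plane, i.e.\ the dual Fano matroid, which your coordinate computation correctly shows is not realizable over $\mathbb{R}$ (the identities $ci=bj$, $gh=fj$, $cd=ag$, $afi+bdh=0$ and the final $2abf=0$ all check out). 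Your upper-bound example, the dual non-Fano, is projectively the same configuration as the paper's cube-minus-a-vertex and gives $\tau_4=6$, $\tau_3=11$ as claimed. What your approach buys is uniformity and self-containedness: it is exactly the method the paper itself uses for $e_2(8)$ in Theorem~\ref{e28} (counting pins down a unique extremal incidence structure, coordinates kill it), and it avoids the paper's slightly underspecified steps (the tacit assumption that a $4$-secant exists, and the ``at most five lines'' claim). The paper's argument is shorter and more visual, and localizes the obstruction to the planar Fano configuration rather than its three-dimensional dual; ultimately both proofs rest on the same characteristic-two phenomenon.
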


\begin{proof}
Consider four points of $S$ that span a plane $\pi$. The other three points of $S$ span a plane $\pi'$, which intersects $\pi$ in a line $\ell$. We consider the three possibilities for $\ell \cap S$ separately.

If $\ell \cap S= \emptyset$ then a hyperplane different from $\pi$ and containing at least four points of $S$ must contain two points $x,y \in S \cap \pi$ and two points $x',y' \in S \cap \pi'$ (so $\tau_5=\tau_6=0$). Let $T$ be the set of three points of $\ell$ which is the intersection of a line joining $x'$ and $y'$ (both points of $S \cap \pi'$) and $\ell$. The points of $T$ are on at most five lines joining two points of $S \cap \pi$, since otherwise the plane $\pi$ would contain a Fano plane. Thus, $\tau_4 \leqslant 5+1=6$, where the extra hyperplane containing four points of $S$ is $\pi$ itself. Lemma~\ref{trivcount} now implies $\tau_3 \geqslant 11$.

If $\ell \cap S=\{ z \}$ then a hyperplane different from $\pi$ and $\pi'$ and containing at least four points of $S 	\setminus \{ z \}$ must contain two points $x,y \in S \cap \pi$ and two points $x',y' \in S \cap \pi'$ (so $\tau_5=\tau_6=0$). This can be done in at most three ways. The point $z$ is on both $\pi$ and $\pi'$, which contain four points of $S$, but cannot belong to any further planes with four points of $S$, since such a plane would contain either two points of $\pi \setminus \{ z \}$ or two points of $\pi' \setminus \{ z \}$ and must therefore be either $\pi$ or $\pi'$. Thus, $\tau_4 \leqslant 5$ and Lemma~\ref{trivcount} now implies $\tau_3 \geqslant 15$.

If $\ell \cap S=\{ z,z' \}$ then either $\pi$ or $\pi'$ contains five points of $S$ and the other contains four points of $S$. Without loss of generality we can assume $\pi'$ contains five points of $S$.  The points $z$  and $z'$ cannot belong to any further planes with more than four points of $S$, since such a plane would contain either two points of $\pi \setminus \{ z, z' \}$ or two points of $\pi' \setminus \{ z , z'\}$ and must therefore be either $\pi$ or $\pi'$. The line joining the two points of $\pi \cap S  \setminus \{ z, z' \}$ can meet at most one line joining two points of $\pi' \cap S  \setminus \{ z, z' \}$ since $\pi' \cap S  \setminus \{ z, z' \}$  contains only three points. Therefore $\tau_4 \leqslant 1+1=2$. Lemma~\ref{trivcount} now implies $\tau_3 \geqslant 35-8-10=17$.

It only remains to provide an example. The cube with a vertex deleted has $\tau_4=6$ and $\tau_5=\tau_6=0$, so Lemma~\ref{trivcount} implies $\tau_3 =11$. Therefore, $e_3(7)=11$.
\end{proof}

\begin{theorem}
$$
e_4(8) \geqslant 25.
$$
\end{theorem}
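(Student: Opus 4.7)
The plan is to extract the lower bound from the two counting identities already available. Specializing Lemma~\ref{trivcount} to $n=8$, $d=4$ gives
$$\tau_4 + 5\tau_5 + 15\tau_6 + 35\tau_7 = 70,$$
while Lemma~\ref{bettercount} gives
$$3\tau_5 + 12\tau_6 + 21\tau_7 \leqslant 28.$$
It will therefore suffice to establish that $5\tau_5 + 15\tau_6 + 35\tau_7 \leqslant 45$, as this combined with the first identity yields $\tau_4 \geqslant 25$.

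Before starting, observe that the continuous relaxation of this linear program only yields $5\tau_5 + 15\tau_6 + 35\tau_7 \leqslant \tfrac{5}{3}\cdot 28 = \tfrac{140}{3}$ and hence $\tau_4 \geqslant 24$. The missing unit must therefore come from exploiting the fact that each $\tau_i$ is a non-negative integer. This integrality step is the main (and only mild) subtlety of the argument.

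I would then run a short case analysis on $\tau_7$. Since $2\cdot 21 > 28$, we have $\tau_7 \in \{0,1\}$. If $\tau_7 = 1$, then $3\tau_5 + 12\tau_6 \leqslant 7$ forces $\tau_6 = 0$ and $\tau_5 \leqslant 2$, giving $5\tau_5 + 15\tau_6 + 35\tau_7 \leqslant 10 + 35 = 45$. If $\tau_7 = 0$, then $\tau_6 \in \{0,1,2\}$ (since $12\cdot 3 > 28$); the corresponding integer bounds $\tau_5 \leqslant 9$, $\tau_5 \leqslant 5$, $\tau_5 \leqslant 1$ yield $5\tau_5 + 15\tau_6 \leqslant 45$, $40$, $35$ respectively. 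In every case $5\tau_5 + 15\tau_6 + 35\tau_7 \leqslant 45$, and substitution into the identity from Lemma~\ref{trivcount} gives $\tau_4 \geqslant 25$. The extremal integer profile $(\tau_5,\tau_6,\tau_7) = (9,0,0)$ saturates the target without saturating the inequality from Lemma~\ref{bettercount}, which explains why the continuous relaxation fails to detect the correct bound.
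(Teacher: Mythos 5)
Your proof is correct and takes essentially the same route as the paper: both combine Lemma~\ref{trivcount} with Lemma~\ref{bettercount} and then use integrality to push the linear-programming bound of $\tau_4 \geqslant 24$ up to $25$. The only difference is in that last step, and it is cosmetic: the paper rules out $\tau_4=24$ by noting it forces $\tau_6=0$ and then $5\tau_5+35\tau_7=46$, impossible modulo $5$, whereas you enumerate the integer points of $3\tau_5+12\tau_6+21\tau_7\leqslant 28$ directly.
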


\begin{proof}
Lemma~\ref{bettercount} implies 
$$
3\tau_5+12\tau_6+21 \tau_7 \leqslant 28
$$
and Lemma~\ref{trivcount} implies
$$
\tau_4 +5\tau_5+15\tau_6+35 \tau_7 =70.
$$
Hence
$$
3\tau_4 \geqslant 70+15 \tau_6.
$$
and so $\tau_4 \geqslant 24$. If $\tau_4=24$ then $\tau_6=0$ and so Lemma~\ref{trivcount} implies $\tau_5 \not\in {\mathbb Z}$. Hence $\tau_4 \geqslant 25$.
\end{proof}

In the same way one can show $e_5(9) \geqslant 54$.

\begin{theorem} \label{e3nine}
$$
e_3(9) \geqslant 14.
$$
\end{theorem}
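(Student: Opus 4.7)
The plan is to combine the projection bound of Lemma~\ref{projectS} with the structural result Theorem~\ref{e28} and the counting identity Lemma~\ref{trivcount}, then rule out $\tau_3 \in \{12,13\}$ by a mixture of integrality and parity. Applying Lemma~\ref{projectS} with $n=9$, $d=3$: each projection $S_x$ consists of $8$ distinct points in PG$(2,\mathbb{R})$ (no three points of $S$ are collinear, by hypothesis) and is not all collinear (since $S$ spans the whole space), so $N_x \geq e_2(8)=4$. Summing over the nine points of $S$ gives $3\tau_3 = \sum_x N_x \geq 36$, hence $\tau_3 \geq 12$, and it remains to exclude $\tau_3 \in \{12,13\}$.

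Assume $\tau_3 \leq 13$. Then $\sum_x(N_x-4) \leq 3$, so at least six of the nine points satisfy $N_x=4$, and for each such $x$ Theorem~\ref{e28} gives that $S_x$ is projectively equivalent to the regular polygon example $X_8$. A direct inspection of $X_8$ (verified against $\binom{8}{2}=28$) shows its line spectrum to be $\mu_2=4$, $\mu_3=6$, $\mu_4=1$, and $\mu_k=0$ for $k \geq 5$. Translated through the projection, this says that through every such $x$ the set $S$ has exactly one $5$-secant plane and no $k$-secant plane for $k \geq 6$. In particular, any $k$-secant plane of $S$ with $k \geq 6$ must consist entirely of points $y$ with $N_y \geq 5$; but there are at most three such points in $S$, so $\tau_k=0$ for $k \geq 6$. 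Lemma~\ref{trivcount} then collapses to $\tau_3+4\tau_4+10\tau_5=84$. For $\tau_3=13$ this forces $4\tau_4+10\tau_5=71$, impossible by parity. For $\tau_3=12$, every $x \in S$ satisfies $N_x=4$ and lies on exactly one $5$-secant plane, so counting point-to-$5$-secant incidences two ways yields $5\tau_5=9$, which has no solution in non-negative integers.

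The main obstacle I anticipate is the structural step: one must verify both that Theorem~\ref{e28} genuinely applies to every projection $S_x$ with $N_x=4$ and that the line-multiplicity vector of $X_8$ contains no line with five or more points. Without this sharp characterisation, neither the parity contradiction at $\tau_3 = 13$ nor the integrality contradiction at $\tau_3 = 12$ would go through, since Lemmas~\ref{trivcount} and~\ref{bettercount} alone are satisfied by admissible real solutions in that range.
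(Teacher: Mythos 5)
Your proof is correct, and it takes a genuinely different route from the paper. The paper splits into cases according to $\tau_5$, the number of $5$-secant planes: for $\tau_5\geqslant 2$ it analyses the line $\ell=\pi\cap\pi'$ common to two $5$-secant planes and bounds the number of ordinary planes through the (one or two) points of $S$ on $\ell$ by hand; for $\tau_5=1$ and $\tau_5=0$ it uses Theorem~\ref{e28} in contrapositive form (a projection containing a $5$-secant line, or admitting no $4$-secant line, cannot be $X_8$ and hence spans at least $5$ ordinary lines) and then averages. You instead case on $\tau_3$ itself: the averaging identity $\sum_x N_x=3\tau_3$ together with $N_x\geqslant e_2(8)=4$ gives $\tau_3\geqslant 12$ immediately, and the values $12$ and $13$ are excluded by combining the full line spectrum of $X_8$ (exactly one $4$-secant, nothing larger --- a fact you correctly verify against $\binom{8}{2}=28$) with Lemma~\ref{trivcount}, yielding a parity contradiction at $\tau_3=13$ and the divisibility contradiction $5\tau_5=9$ at $\tau_3=12$. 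All the supporting steps check out: the projections $S_x$ are sets of $8$ distinct, non-collinear points because every $3$ points of $S$ span a plane; the slack bound $\sum_x(N_x-4)\leqslant 3$ does limit the points with $N_y\geqslant 5$ to at most three, which kills all $k$-secant planes with $k\geqslant 6$; and Theorem~\ref{e28} applies exactly where you invoke it. What your approach buys is the elimination of the paper's most delicate case ($\tau_5\geqslant 2$, with its ad hoc geometric analysis of the two $5$-secant planes), at the price of needing the precise multiplicity vector of $X_8$ rather than just the existence of its $4$-secant; both arguments rest on the same external inputs, namely $e_2(8)=4$ and the characterisation in Theorem~\ref{e28}.
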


\begin{proof}
We split the proof up depending on the number of $5$-secant planes.

Let $S$ be a set of $9$ points in $\mathrm{PG}(3,{\mathbb R})$ with the property that any  three points of $S$ span a plane.

Suppose $\tau_5 \geqslant 2$.  

Let $\pi$ and $\pi'$ be two $5$-secant planes and define $\ell=\pi \cap \pi'$. 

Since $|S|=9$ there is either one or two points of $S$ incident with $\ell$. 

If there is exactly one point $x \in S \cap \ell$ then $x$ projects $S$ onto a set of $8$ points in the plane such that the points are divided into two sets of four collinear points. Hence, $x$ is incident with $16$ ordinary planes and so $S$ spans at least $(16+8 \times 4)/3=16$ ordinary planes, since $e_2(8)=4$.

Suppose there are two points $x$ and $y \in S \cap \ell$. Each of these points projects $S$ onto a set of $8$ points, seven of which are contained in the union of two lines. The six points which are not the intersection of these lines span nine other lines of which at least six must be ordinary lines. Furthermore the projected point which is not on the union of the two lines, together with the point which is the intersection of the two lines, spans an ordinary line. Hence, both $x$ and $y$ are incident with at least $7$ ordinary planes. Therefore, $S$ spans at least $((2 \times 7)+(7 \times 4))/3=14$ ordinary planes.

Suppose $\tau_5=1$. By Theorem~\ref{e28}, a point not incident with the $5$-secant plane projects to a set of $8$ points in the plane spanning at least $5$ ordinary lines, so is incident with at least $5$ ordinary planes.. Therefore, $S$ spans at least $\lceil ((4 \times 5)+(5 \times 4))/3 \rceil=14$ ordinary planes. 

Suppose $\tau_5=0$. By Theorem~\ref{e28}, each point projects to a set of $8$ points in the plane spanning at least $5$ ordinary lines, so is incident with at least $5$ ordinary planes. Therefore, $S$ spans at least $(9 \times 5)/3 =15$ ordinary planes. 

\end{proof}

\section{Conclusions and conjectures}

This article introduces a problem in the hope that it will gain some attention. It seems to us a very interesting and natural question to ask and appears rather difficult to answer. In this final section we make some conjectures about the value of $e_d(n)$. Firstly, we recall the Dirac-Motzkin conjecture.

\begin{conjecture} \label{DM}
For all $n$, $e_2(n)\geqslant \lfloor \frac{1}{2}n \rfloor $.
\end{conjecture}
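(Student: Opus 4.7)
The plan is to combine the structural theorem of Green and Tao (Theorem~\ref{twodimstructure}) with a direct verification for small values of $n$; this is essentially the strategy carried out in \cite{GT2012} to establish the Dirac--Motzkin conjecture for all sufficiently large $n$. The key observation is that each of the three asymptotic formulas for $e_2(n)$ listed in Theorem~\ref{upperlower}, namely $n/2$, $(3n-3)/4$, and $(3n-9)/4$ according to the residue of $n$ modulo $4$, is at least $\lfloor n/2 \rfloor$ (trivially in the first two cases, and for $n \geqslant 7$ in the third). Hence whenever the structural theorem is applicable, the conjecture follows, and it remains to bridge the gap between the threshold of Theorem~\ref{twodimstructure} and the small cases treated directly.

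For the large-$n$ step, fix $n \geqslant 2c$, with $c$ the constant from Theorem~\ref{twodimstructure}, and let $S$ be any configuration achieving $e_2(n)$. If $S$ spans at least $n-c$ ordinary lines then $e_2(n) \geqslant n - c \geqslant \lfloor n/2 \rfloor$ and we are done. Otherwise Theorem~\ref{twodimstructure} forces $S$ to be projectively equivalent to one of the regular polygon examples of Lemma~\ref{boro}, and the exact counts provided there are manifestly at least $\lfloor n/2 \rfloor$ by the elementary algebraic check above.

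For the small-$n$ step, the table following Theorem~\ref{e3nine} handles $n \leqslant 13$, since the tabulated values in the column $d=2$ all satisfy $e_2(n) \geqslant \lfloor n/2 \rfloor$. One would then need to treat every $n$ in the intermediate range $14 \leqslant n < 2c$. The hard part, and in my view the genuine obstacle, is that the constant $c$ produced by the proof of Theorem~\ref{twodimstructure} is enormous, so exhaustive case analysis or computer enumeration over this range appears hopeless. Closing this gap seems to require either a substantial quantitative refinement of Theorem~\ref{twodimstructure}, bringing $c$ into a computationally tractable range, or a genuinely new combinatorial argument going beyond the bound $e_2(n) \geqslant 6n/13$ of Csima and Sawyer (which is too weak by itself for the conjecture); an incremental lower bound that incorporates parity information to rule out any competitor with fewer ordinary lines than the regular polygon family would also suffice.
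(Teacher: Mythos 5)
The statement you are trying to prove is labelled a \emph{conjecture} in the paper, and the paper offers no proof of it: the authors explicitly record that Green and Tao established it only for $n \geqslant n_0$ with $n_0$ large, and that the full Dirac--Motzkin conjecture remains open. So there is no proof in the paper to compare yours against, and your proposal --- as you yourself concede in the final paragraph --- is not a proof either. Your large-$n$ argument is sound: the dichotomy ``either $S$ spans at least $n-c$ ordinary lines, or Theorem~\ref{twodimstructure} forces $S$ into the regular polygon family of Lemma~\ref{boro}'' is exactly the Green--Tao strategy, and your arithmetic check that $n/2$, $\tfrac{3}{4}(n-1)$ and $\tfrac{3}{4}n-\tfrac{9}{4}$ all dominate $\lfloor n/2\rfloor$ (the last for $n\geqslant 7$) is correct, as is your reading of the $d=2$ column of the table for $n\leqslant 13$.

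The genuine gap is the one you name: the range $14 \leqslant n < n_0$, where $n_0$ coming out of the proof of Theorem~\ref{twodimstructure} is far beyond anything that direct verification or the Csima--Sawyer bound $e_2(n)\geqslant 6n/13$ can reach (indeed $6n/13 < \lfloor n/2\rfloor$ for all relevant $n$, so that bound can never close the gap on its own). Nothing in this paper, nor in the cited literature, bridges that range; that is precisely why the statement is posed as Conjecture~\ref{DM} rather than proved as a theorem. Your write-up is a correct and honest assessment of the state of the problem, but it should be presented as such --- a reduction of the conjecture to an effective version of the structural theorem --- and not as a proof.
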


As we have seen, Green and Tao \cite{GT2012} proved Conjecture~\ref{DM} for $n\geqslant n_0$, where $n_0$ is large. Indeed, they prove more, that the exact values of $e_2(n)$, for $n \geqslant n_0$ are as in Theorem~\ref{upperlower}. It may be that the correct conjecture is that the exact values of $e_2(n)$ are as in Theorem~\ref{upperlower}, for all $n \geqslant n_0$, where $n_0$ is substantially smaller.  Note that for $n=13$, we have that $e_2(13)=6$, whereas $\frac{3}{4}(n-1)=9$, so we must take $n_0 \geqslant 14$.

We conjecture that the following is true.

\begin{conjecture} \label{BM}
For all $n$, $e_3(n)\geqslant \frac{1}{4}n^2-n$.
\end{conjecture}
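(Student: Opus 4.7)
The plan is to establish the bound by combining the asymptotic structural theorem (Theorem~\ref{threedimstructure}) with explicit counts for each extremal configuration, an inductive argument for sets containing four collinear points, and direct verification for small $n$.

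For $n$ sufficiently large, fix the constant $c$ of Theorem~\ref{threedimstructure} and choose $n_0$ so that $\frac{1}{4}n^2-n<\frac{1}{2}n^2-cn$ for $n\geqslant n_0$. If $S$ spans fewer than $\frac{1}{4}n^2-n$ ordinary planes, then Theorem~\ref{threedimstructure} forces $S$ to be projectively equivalent to a prism, a skew prism, either of these with a point deleted, or to contain four collinear points. For the prism and its single-deletion variant, Lemma~\ref{prism} already delivers at least $\frac{1}{4}n^2-n$ ordinary planes in every residue class mod~$4$, and the same sine/cosine-sum technique used in \cite{Ball2016} should handle the skew prism and its deletion analogously, confirming that no ``smooth'' extremal example improves on the prism.

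The main obstacle is the case in which $S$ contains a line $\ell$ carrying $k\geqslant 4$ points of $S$. No ordinary plane contains $\ell$, since every plane through $\ell$ meets $S$ in at least $k\geqslant 4$ points. The proposed strategy is induction on $n$: remove a point $x\in S\cap \ell$ and compare the ordinary planes of $S$ with those of $S\setminus\{x\}$. A short bookkeeping yields
\[
N(S)=N(S\setminus\{x\})+a(x)-b(x),
\]
where $a(x)$ counts ordinary planes of $S$ through $x$ and $b(x)$ counts $4$-secant planes of $S$ through $x$. Assuming the conjecture for $n-1$ reduces the problem to showing $a(x)-b(x)\geqslant \tfrac{1}{2}n-\tfrac{5}{4}$ for a well-chosen $x\in S\cap\ell$. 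Projecting from $x$, the remaining $k-1$ points of $\ell$ collapse to a single point $p_\ell\in \mathrm{PG}(2,\mathbb{R})$ of multiplicity $k-1$, while the $n-k$ points off $\ell$ project generically injectively; the ordinary and $3$-rich lines of this multiset that avoid $p_\ell$ correspond exactly to the planes counted by $a(x)$ and $b(x)$. The delicate part, and where most of the work lies, is to establish a weighted Sylvester--Gallai inequality for the multiset $S_x$ that is sharp enough for the induction to close, exploiting the fact that $p_\ell$ is a heavy point which forces many ordinary lines off it by the classical arguments of Kelly--Moser.

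For $n<n_0$ the conjecture must be checked one value at a time. Several entries in the small-values table are already verified in the paper, and the bound holds there; the remaining cases appear tractable via Lemma~\ref{project2} combined with Theorem~\ref{lowerboundforsmalls} and Theorem~\ref{e28}-style structural arguments, with computer enumeration as a fallback for the borderline values such as $n=11$ (where we need to improve $e_3(11)\geqslant 19$ to $e_3(11)\geqslant 20$) and $n=13$ (where we need to improve $e_3(13)\geqslant 26$ to $e_3(13)\geqslant 30$).
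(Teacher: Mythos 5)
This statement is presented in the paper as an open conjecture (Conjecture~\ref{BM}); the paper offers no proof of it, only the asymptotic result of Theorem~\ref{upperlower} (which gives the exact value of $e_3(n)$ for $n \geqslant n_0$ with $n_0$ large and unspecified) and partial lower bounds for small $n$. Your proposal is a programme, not a proof, and the two places where you defer the real work are exactly the places where the conjecture remains open. First, the ``main obstacle'' you identify --- sets containing four collinear points --- is actually vacuous here: by definition $e_3(n)$ minimises only over sets $S$ in which every $3$ points span a plane, so $S$ cannot contain even three collinear points, and the fourth outcome of Theorem~\ref{threedimstructure} never arises. The lengthy inductive scheme built on an unproven ``weighted Sylvester--Gallai inequality'' for the projected multiset is therefore addressing a case that does not occur, while the genuinely hard part is left untouched.

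That hard part is the range of $n$ below the (ineffective) threshold $n_0$ of Theorem~\ref{threedimstructure}. Since no explicit $n_0$ is available, ``check the finitely many remaining values, by computer if necessary'' is not a finite, well-posed task. Moreover the paper's own table already witnesses that the needed bounds are not known: the conjecture requires $e_3(11)\geqslant 20$ and $e_3(13)\geqslant 30$, whereas the best bounds established are $e_3(11)\geqslant 19$ and $e_3(13)\geqslant 26$, and $e_3(9)$ is only pinned down to the interval $[14,22]$. You correctly flag $n=11$ and $n=13$, but flagging them does not close them, and Lemma~\ref{project2} together with Theorem~\ref{lowerboundforsmalls} demonstrably does not suffice (those are precisely the tools that yield the current, insufficient bounds). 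Until either an effective version of the structure theorem or new arguments for moderate $n$ are supplied, the statement remains a conjecture, and your proposal should be presented as a strategy rather than a proof.
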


Again, the asymptotic results from \cite{Ball2016}, imply that the exact values of $e_3(n)$, for $n \geqslant n_0$, where $n_0$ is large, are as in Theorem~\ref{upperlower}. It may be that the correct conjecture is that the exact values of $e_3(n)$ are as in Theorem~\ref{upperlower}, for all $n \geqslant n_0$, where $n_0$ is substantially smaller.  

Finally, we conjecture the following.

\begin{conjecture}
Suppose $d \geqslant 4$. There is a constant $c_d$, such that for $n$ sufficiently large, $e_d(n) \geqslant \frac{1}{(d-1)!}n^{d-1}-c_dn^{d-2}$.
\end{conjecture}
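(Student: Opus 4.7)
The plan is to prove the conjecture by induction on $d$ with base case $d=4$, via a strengthened form of Theorem~\ref{ddimstructure}. The target stability statement is: for $d\ge 4$, any set $S\subset\mathrm{PG}(d,\mathbb R)$ of $n$ points in general position spanning fewer than $\frac{1}{(d-1)!}n^{d-1}-c_dn^{d-2}$ ordinary hyperplanes has $n-O(1)$ points on a common hyperplane, i.e., $S$ is a bounded perturbation of the trivial configuration of Lemma~\ref{trivex}. Such a stability result implies the conjectured bound directly: Lemma~\ref{trivex} shows the trivial configuration has exactly $\binom{n-1}{d-1}=\frac{1}{(d-1)!}n^{d-1}-O(n^{d-2})$ ordinary hyperplanes, and moving $O(1)$ points off the hyperplane alters the count by $O(n^{d-2})$.

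For the inductive step from dimension $d-1$ to dimension $d$ (with $d\ge 5$), I would use Lemma~\ref{projectS} to pick $x\in S$ with $N_x\le dN/n$. Choosing $c_d>\tfrac{1}{d(d-3)!}+\tfrac{c_{d-1}}{d}$, a routine asymptotic expansion shows that the upper bound $dN/n$ falls below the dimension-$(d-1)$ stability threshold $\frac{1}{(d-2)!}(n-1)^{d-2}-c_{d-1}(n-1)^{d-3}$, so by the inductive hypothesis $S_x$ has $(n-1)-O(1)$ points on a hyperplane of $\mathrm{PG}(d-1,\mathbb R)$; pulling this back through the projection from $x$ exhibits a hyperplane of $\mathrm{PG}(d,\mathbb R)$ through $x$ containing $(n-1)-O(1)$ points of $S$, which, together with $x$, gives the desired stability conclusion for $S$. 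Projections that fail the general-position assumption, namely $x$ incident with $(d+1)$-secant or higher hyperplanes of $S$, would require separate handling: one would bound the number of such $x$ or absorb their contribution via Lemmas~\ref{trivcount} and~\ref{bettercount}.

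The main obstacle is the base case $d=4$. Theorem~\ref{threedimstructure} classifies configurations in $\mathrm{PG}(3,\mathbb R)$ with fewer than $\tfrac{1}{2}n^2-cn$ ordinary planes, but projecting from dimension four gives $N_x$ potentially as large as $\tfrac{2}{3}n^2$, and the extremal examples in three dimensions (prisms, skew prisms, and their variants) are \emph{not} bounded perturbations of the trivial example. Consequently a stability result in dimension three in the form the induction requires cannot hold, and the base case must be established by a genuinely new argument in dimension four directly. In particular, one must analyse sets $S\subset\mathrm{PG}(4,\mathbb R)$ whose generic projection $S_x$ is prism-like rather than near-trivial, and show, via a combination of Lemmas~\ref{trivcount}, \ref{bettercount}, and \ref{ints}, that such $S$ still span more than $\tfrac{1}{6}n^3-c_4n^2$ ordinary hyperplanes. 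The factor $(d-1)/d$ by which Lemma~\ref{projectS} alone falls short of the target bound $\frac{1}{(d-1)!}n^{d-1}$ reflects precisely this gap, and closing it in dimension four is the crux of the conjecture.
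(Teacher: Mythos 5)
This statement is labelled as a conjecture in the paper, and the authors give no proof of it; the strongest lower bound they actually establish is $e_d(n)\geqslant \frac{3}{d!}n^{d-1}-\frac{c}{d!}n^{d-2}$ (Theorem~\ref{upperlower}, via the structural Theorem~\ref{ddimstructure}), which falls short of the conjectured constant $\frac{1}{(d-1)!}=\frac{d}{d!}$ by a factor of $3/d$. So there is nothing in the paper to compare your argument against, and your proposal does not close the gap either. You are candid that the base case $d=4$ is unresolved, and that alone means this is not a proof. But the difficulty is worse than you state: the inductive step for $d\geqslant 5$ fails for exactly the same numerical reason as the base case. If $N<\frac{1}{(d-1)!}n^{d-1}$, then Lemma~\ref{projectS} gives a point $x$ with $N_x\leqslant dN/n<\frac{d}{(d-1)!}n^{d-2}$, whereas the dimension-$(d-1)$ stability threshold you want to invoke has leading term $\frac{1}{(d-2)!}(n-1)^{d-2}\approx\frac{d-1}{(d-1)!}n^{d-2}$. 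Since $\frac{d}{(d-1)!}>\frac{d-1}{(d-1)!}$, the pigeonhole projection never lands below that threshold, in any dimension; your claim that a ``routine asymptotic expansion'' shows otherwise is incorrect. This per-step loss of $d/(d-1)$ is precisely why iterated projection (Lemma~\ref{project} and Lemma~\ref{project2}) can only ever produce a constant of the form $C/d!$ with $C$ inherited from the lowest-dimensional input, and it is why the paper stops at $\frac{3}{d!}$.

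A secondary issue is the reduction from stability to the bound: you assert that moving $O(1)$ points off the hyperplane of the trivial configuration of Lemma~\ref{trivex} alters the ordinary-hyperplane count by $O(n^{d-2})$. With $k\geqslant 2$ points off the hyperplane, the number of hyperplanes spanned by one outside point and $d-1$ inside points is of order $k{n-k \choose d-1}$, so the change is of order $n^{d-1}$, not $n^{d-2}$; this happens to go in the direction a lower bound needs, but it must be argued rather than asserted, since some of these hyperplanes pick up extra points and cease to be ordinary. In summary, your proposal correctly identifies where the obstruction lies --- the factor lost in projecting --- but it establishes nothing beyond what the paper already proves, and the statement remains open.
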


It is even possible that $e_d(n)={n-1 \choose d-1}$, for $d \geqslant 4$ and $n$ sufficiently large.

\bigskip
{\small Simeon Ball and Joaquim Monserrat}  \\
{\small Departament de Matem\`atiques}, \\
{\small Universitat Polit\`ecnica de Catalunya, Jordi Girona 1-3},
{\small M\`odul C3, Campus Nord,}\\
{\small 08034 Barcelona, Spain} \\
{\small {\tt simeon@ma4.upc.edu}}

\end{document}